\author{Michael Feischl}
\author{Gregor Gantner}
\author{Dirk Praetorius}
\address{Vienna University of Technology,
 Institute for Analysis and Scientific Computing,
 Wiedner Hauptstra\ss{}e 8-10,
 A-1040 Wien, Austria}
\email{\{\,Michael.Feischl\,,\,Dirk.Praetorius\,\}@tuwien.ac.at}
\email{Gregor.Gantner@tuwien.ac.at\quad\rm(corresponding author)}
\keywords{isogeometric analysis, boundary element method, a~posteriori error estimate, adaptive mesh-refinement}
\subjclass[2000]{65N30, 65N50}
\title{Reliable and efficient a~posteriori error estimation\\ for adaptive IGA boundary element methods\\ for weakly-singular integral equations}
\def\H{\widetilde{H}}
\def\N{\mathbb{N}}
\def\R{\mathbb{R}}
\def\Z{\mathbb{Z}}
\def\KK{\mathcal{K}}
\def\NN{\mathcal{N}}
\def\TT{\mathcal{T}}
\def\supp{{\rm supp}}
\newcommand{\set}[3][\big]{#1\{#2\,:\,#3#1\}}
\newcommand{\norm}[3][]{#1\|#2#1\|_{#3}}
\def\dual#1#2{\langle#1\,;\,#2\rangle}
\def\enorm#1{|\!|\!|#1|\!|\!|}
\numberwithin{equation}{section}
\numberwithin{figure}{section}
\newtheorem{theorem}{Theorem}[section]
\newtheorem{proposition}[theorem]{Proposition}
\newtheorem{lemma}[theorem]{Lemma}
\newtheorem{algorithm}[theorem]{Algorithm}
\newtheorem{remark}[theorem]{Remark}
\newcounter{const}
\date{\today}
\def\MM{\mathcal M}
\def\XX{\mathcal X}
\def\Crel{C_{\rm rel}}
\def\Ceff{C_{\rm eff}}
\begin{document}
\maketitle
\begin{abstract}
We consider the Galerkin boundary element method (BEM) for weakly-singular integral equations of the first-kind in 2D.
We analyze some residual-type a~posteriori error estimator which provides a lower as well 
as an upper bound for the unknown Galerkin BEM error. The required assumptions are weak and allow for piecewise smooth 
parametrizations of the boundary, local mesh-refinement, and related standard piecewise polynomials as well as NURBS. 
In particular, our analysis gives a first contribution to adaptive BEM in the frame of isogeometric analysis (IGABEM), for which we formulate an adaptive algorithm which steers the local mesh-refinement and the multiplicity of the knots. 
Numerical experiments underline the theoretical findings and show that the proposed adaptive strategy leads to optimal convergence.
\end{abstract}

\section{Introduction} 


\noindent
{\bf Isogeometric analysis.}\quad
The central idea of isogeometric analysis is to use the same ansatz functions for
the discretization of the partial differential equation at hand, as are used for 
the representation of the problem geometry. Usually, the problem geometry $\Omega$ is 
represented in computer aided design (CAD) by means of NURBS or T-splines. This 
concept, originally invented in~\cite{hughes2005}
for finite element methods (IGAFEM) has proved very fruitful in 
applications~\cite{hughes2005,simpson2012}; see also the monograph \cite{bible}. 
Since CAD directly provides a parametrization of the boundary $\partial \Omega$, this makes 
the boundary element method (BEM) the most attractive numerical scheme, if applicable
(i.e., provided that the fundamental solution of the differential operator is explicitly known).
Isogeometric BEM (IGABEM) has first been considered in~\cite{simpson2012}.
Unlike standard BEM with piecewise polynomials which is well-studied in the literature,
cf.~the monographs~\cite{ss,steinbach} and the references therein,
the numerical analysis of IGABEM is essentially open. We only refer 
to~\cite{simpson2012,helmholtziga,laplaceiga} for numerical experiments and to~\cite{stokesiga} for some quadrature analysis.
In particular, a~posteriori error estimation has been well-studied for standard BEM,
e.g., \cite{cs95,cs96,cc97,cmps,cms,faermann2d,faermann3d} as well as the recent overview article~\cite{abem}, but has not
been treated for IGABEM so far. The purpose of the present work is to shed some first
light on a~posteriori error analysis for IGABEM which provides some mathematical
foundation of a corresponding adaptive algorithm.

\bigskip

\noindent
{\bf Main result.}\quad
Let $\Omega\subset\R^2$ be a Lipschitz domain and $\Gamma\subseteq \partial\Omega$ be a compact, piecewise smooth part of the boundary with finitely many connected components (see Section~\ref{section:dpr} and Section~\ref{subsec:boundary parametrization}).
Given a right-hand side $f$, 
we consider boundary integral equations in the abstract form
\begin{align}\label{eq:strong}
 V\phi(x) = f(x)
 \quad\text{for all }x\in\Gamma_{},
\end{align}
where $V:\H^{-1/2}(\Gamma_{})\to H^{1/2}(\Gamma_{})$ is an elliptic isomorphism.
Here $H^{1/2}(\Gamma_{})$ is a fractional-order Sobolev space, and
$\H^{-1/2}(\Gamma_{})$ is its dual (see Section~\ref{section:preliminaries} below).
Given $f\in H^{1/2}(\Gamma_{})$, the Lax-Milgram lemma provides existence
and uniqueness of the solution $\phi\in\H^{-1/2}(\Gamma_{})$ of the variational formulation of~\eqref{eq:strong}
\begin{align}\label{eq:weak}
 \int_{\Gamma_{}} V\phi(x)\psi(x)\,dx
 = \int_{\Gamma_{}} f(x)\psi(x)\,dx
 \quad\text{for all }\psi\in\H^{-1/2}(\Gamma_{}).
\end{align}
In the Galerkin boundary element method (BEM), the test
space $\H^{-1/2}(\Gamma_{})$ is replaced by some discrete subspace  $\XX_h\subseteq {L^{2}(\Gamma_{})}\subseteq\H^{-1/2}(\Gamma_{})$.
Again, the Lax-Milgram lemma guarantees existence and uniqueness of the solution
$\phi_h\in\XX_h$ of the discrete variational formulation
\begin{align}\label{eq:discrete}
 \int_{\Gamma_{}} V\phi_h(x)\psi_h(x)\,dx
 = \int_{\Gamma_{}} f(x)\psi_h(x)\,dx
 \quad\text{for all }\psi_h\in\XX_h,
\end{align}
and $\phi_h$ can in fact be computed by solving a linear system of equations.

We assume that $\XX_h$ is linked with a partition $\TT_h$ of $\Gamma$ into a set of connected segments. For each
vertex ${x}\in\NN_h$ of $\TT_h$, let $\omega_h({z}) := \bigcup\set{T\in\TT_h}{{z}\in T}$ 
denote the node patch. If $\XX_h$ is sufficiently rich (e.g., $\XX_h$ contains 
certain splines or NURBS; see Section~\ref{section:abem}), we prove that
\begin{align}\label{eq:faermann}
 \Crel^{-1}\,\norm{\phi-\phi_h}{\H^{-1/2}(\Gamma_{})}
 \le \eta_h:=\Big(\sum_{{z}\in\NN_h}|r_h|_{H^{1/2}(\omega_h({z}))}^2\Big)^{1/2}
 \le \Ceff\,\norm{\phi-\phi_h}{\H^{-1/2}(\Gamma_{})}
\end{align}
with some $\XX_h$-independent constants $\Ceff,\Crel>0$, i.e., the unknown BEM 
error  is controlled by some computable a~posteriori error estimator
$\eta_h$. Here, $r_h:=f-V\phi_h \in H^{1/2}(\Gamma_{})$ denotes the residual and
\begin{align}
 |r_h|_{H^{1/2}(\omega_h({z}))}
 := \int_{\omega_h({z})}\int_{\omega_h({z})}\frac{|r_h(x)-r_h(y)|^2}{|x-y|^2}\,dy\,dx
\end{align}
is the Sobolev-Slobodeckij seminorm. 

Estimate~\eqref{eq:faermann} has first been
proved by Faermann~\cite{faermann2d} for closed $\Gamma_{}=\partial\Omega$ and standard spline spaces $\XX_h$ based on the
arclength parametrization $\gamma:[0,L] \to \Gamma$. In isogeometric analysis, 
$\gamma$ is \emph{not} the arclength parametrization. In our contribution, we 
generalize and refine the original analysis of Faermann~\cite{faermann2d}: Our analysis 
allows, first, closed as well as open parts of the boundary, second, general piecewise smooth parametrizations $\gamma$ and, third, 
covers standard piecewise polynomials as well as NURBS spaces $\XX_h$.

\bigskip

\noindent
{\bf Outline.}\quad
Section~\ref{section:preliminaries} recalls the functional analytic framework,
provides the assumptions on $\Gamma$ and its parametrization $\gamma$, and fixes
the necessary notation. The proof of~\eqref{eq:faermann} is given in 
Section~\ref{section:aposteriori} for sufficiently rich spaces $\XX_h$ {(Theorem \ref{thm:faermann})}. 
In Section~\ref{section:abem}, we recall
the NURBS spaces for IGABEM and prove that these spaces $\XX_h$ satisfy the assumptions {(Assumptions {\rm(A1)--(A2)} in Section \ref{sec:main theorem})} of 
the a~posteriori error estimate~\eqref{eq:faermann}. Based on knot insertion, 
we formulate an adaptive algorithm which is capable to control and adapt the 
multiplicity of the nodes as well as the local mesh-size {(Algorithm \ref{the algorithm})}. The final 
Section~\ref{section:numerics} gives some brief comments on the stable implementation
of adaptive IGABEM {for Symm's integral equation} and provides the numerical evidence for the superiority
of the proposed adaptive IGABEM over IGABEM with uniform mesh-refinement.


\section{Preliminaries}
\label{section:preliminaries}
\noindent{The purpose of this section is to collect the main assumptions on the boundary
and its discretization as well as to fix the notation. 
For more details on Sobolev spaces and the functional analytic setting of weakly-singular integral equations, we refer to the literature, e.g., the monographs \cite{hsiao,mclean,ss} and the references therein.}


Throughout, $|\cdot|$ denotes the absolute value of scalars, the Euclidean norm of vectors in $\R^2$, the measure of a set in $\R$, e.g. the length of an interval, or the arclength of a curve in $\R^{2}$.
The respective meaning will be clear from the context.

\def\Cgamma{C_\Gamma}
\subsection{Sobolev spaces}
For any measurable subset $\omega\subseteq\Gamma$, let $L^2(\omega)$ denote
the Lebesgue space of all square integrable functions which is associated with the norm
$\norm{u}{L^2(\omega)}^2:=\int_\omega |u(x)|^2\,dx$.
We define the Hilbert space
\begin{align}
 H^{1/2}(\omega) := \set{u\in L^2(\omega)}{\norm{u}{H^{1/2}(\omega)}<\infty},
\end{align}
associated with the Sobolev-Slobodeckij norm
\begin{align}\label{eq:SS-norm}
 \norm{u}{H^{1/2}(\omega)}^2
 := \norm{u}{L^2(\omega)}^2
 + |u|_{H^{1/2}(\omega)}^2
 \quad\text{with}\quad
 |u|_{H^{1/2}(\omega)}^2 := \int_\omega\int_\omega\frac{|u(x)-u(y)|^2}{|x-y|^2}\,dy\,dx.
\end{align}
For finite intervals $I\subseteq \R$ we use analogous definitions.
By $\H^{-1/2}(\omega)$, we denote the dual space of $H^{1/2}(\omega)$, where duality
is understood with respect to the $L^2(\omega)$-scalar product, i.e.,
\begin{align}
 \dual{u}{\phi} = \int_\omega u(x)\phi(x)\,dx
 \quad\text{for all }u\in H^{1/2}(\omega)
 \text{ and }\phi\in L^2(\omega).
\end{align}
{We note that $H^{1/2}(\Gamma)\subseteq L^2(\Gamma)\subseteq \H^{-1/2}(\Gamma)$ form a Gelfand triple and all inclusions are dense and compact.}
Amongst other equivalent definitions of $H^{1/2}(\omega)$ are the characterization
as trace space of functions {in} $H^1(\Omega)$ as well as equivalent interpolation techniques.
All these definitions provide the same space of functions but different norms, where
norm equivalence constants depend only  on $\omega$; see, e.g., the monograph~\cite{mclean}
and references therein.
Throughout, we shall use the Sobolev-Slobodeckij
norm \eqref{eq:SS-norm}, since it is  numerically computable.

\subsection{Connectedness of $\Gamma$}\label{section:dpr}
Let the part of the boundary $\Gamma = \bigcup_i\Gamma_i$ be decomposed into its finitely many connected
components $\Gamma_i$. The $\Gamma_i$ are compact and piecewise
smooth as well. Note that this yields existence of some constant $c>0$ such
that $|x-y|\ge c > 0$ for all $x\in\Gamma_i$, $y\in\Gamma_j$, and $i\neq j$.
Together with $|u(x)-u(y)|^2 \le 2\,|u(x)|^2 + 2\,|u(y)|^2$, this provides the 
estimate
\begin{align*}
 \sum_{{i,j}\atop{i\neq j}} \int_{\Gamma_i}\int_{\Gamma_j}
 \frac{|u(x)-u(y)|^2}{|x-y|^2}\,dy\,dx
 \lesssim \sum_{i}\norm{u}{L^2(\Gamma_i)}^2
 + \sum_{j}\norm{u}{L^2(\Gamma_j)}^2
 \simeq \norm{u}{L^2(\Gamma)}^2
\end{align*}
and results in norm equivalence
\begin{align*}
 \|u\|^2_{H^{1/2}(\Gamma)}
 = \sum_{i}\|u\|^2_{H^{1/2}(\Gamma_i)} 
 + \sum_{{i,j}\atop{i\neq j}} \int_{\Gamma_i}\int_{\Gamma_j}\frac{|u(x)-u(y)|^2}{|x-y|^2}\,dy\,dx
 \simeq \sum_{i} \|u\|^2_{H^{1/2}(\Gamma_i)}.
\end{align*}
The usual piecewise polynomial and NURBS basis functions have connected support
and are hence supported by some \emph{single} $\Gamma_i$ each. Without loss of generality
and for the ease of presentation, 
we may therefore from now on assume that $\Gamma$ is connected. All results of this
work remain valid for non-connected $\Gamma$.
\color{black}

\subsection{Boundary parametrization}
\label{subsec:boundary parametrization}
We assume that either $\Gamma=\partial\Omega$ is parametrized by a closed continuous and
piecewise {two times} continuously differentiable path $\gamma:[a,b]\to\Gamma$ such
that the restriction $\gamma|_{[a,b)}$ is even bijective, or that $\Gamma\subsetneqq\partial\Omega$ is parametrized by a bijective continuous and piecewise two times continuously differentiable path $\gamma:[a,b]\to\Gamma$.  In the first case, we speak of \textit{closed} $\Gamma=\partial\Omega$, whereas the second case is referred to as \textit{open} $\Gamma\subsetneqq\partial\Omega$.
For closed $\Gamma$, we denote the $(b-a)$-periodic extension to $\R$ also by $\gamma$.
For the left and right derivative of $\gamma$, we assume that {$\gamma^{\prime_\ell}(t)\neq 0$ for $t\in(a,b]$ and $\gamma^{\prime_r}(t)\neq 0$  for $t\in [a,b)$.}
Moreover we assume that $\gamma^{\prime_\ell}(t)
+c\gamma^{\prime_r}(t)\neq0$ for all $c>0$ {and $t\in[a,b]$ resp. $t\in(a,b)$.} 
Finally, let $\gamma_L:[0,L]\to\Gamma$ denote the arclength parametrization, i.e.,
$|\gamma_L^{\prime_\ell}(t)| = 1 = |\gamma_L^{\prime_r}(t)|$, and its periodic extension. Then, elementary
differential geometry yields bi-Lipschitz continuity
\begin{align}\label{eq:bi-Lipschitz}
 \Cgamma^{-1} \le \frac{|\gamma_L(s)-\gamma_L(t)|}{|s-t|}\le\Cgamma
 \quad\text{for }s,t\in\R, {\text{ with }\begin{cases}
 |s-t|\le \frac{3}{4}\,L, \text{ for closed }\Gamma,\\
  s\neq t\in [0,L], \text{ for open }\Gamma.
\end{cases}}
\end{align}
A proof is given in \cite[Lemma 2.1]{diplarbeit} for closed $\Gamma$. 
For open $\Gamma$, the proof is even simpler. 
If $\Gamma$ is closed and $|I|\le \frac{3}{4} L$ resp. if $\Gamma$ is open and $I\subseteq [a,b]$,  we see from \eqref{eq:bi-Lipschitz} that
\begin{align}\label{eq:equivalent Hsnorm}
\Cgamma^{-1}|u\circ\gamma_{L}|_{H^{1/2}(I)}\leq |u|_{H^{1/2}(\gamma_L(I))}\leq \Cgamma|u\circ\gamma_{L}|_{H^{1/2}(I)}.
\end{align}

\subsection{Boundary discretization}
The part of the boundary $\Gamma$ is split into a set $\mathcal{T}_h=\{T_1,\dots,T_n\}$ of compact and connected segments $T_j$.
The endpoints of the elements of $\TT_h$ form the set of nodes $\mathcal{N}_h:=\set{z_j}{j=1,\dots,n}$ for closed $\Gamma$ and $\mathcal{N}_h=\set{z_j}{j=0,\dots,n}$ for open $\Gamma$.
The arclength of each element $T\in \mathcal{T}_h$ is denoted by $h_{T}$, where $h:=\max_{T\in\TT_h} h_T$.
Moreover, we define the \textit{shape regularity constant} 
\begin{align*}
\kappa(\mathcal{T}_h)&:=\max\Big(\set{h_{T}/h_{T'}}{T,T'\in\mathcal{T}_h, T\cap T'\neq \emptyset}\Big)
\end{align*}
For closed $\Gamma$,  we extend the nodes, elements and their length periodically. We suppose
\begin{align}\label{eq:h small}
h \le |\Gamma|/4,
\end{align}
if $\Gamma$ is closed.
\subsection{Parameter domain discretization}
Given the parametrization $\gamma:[a,b]\to \Gamma$, the discretization $\TT_h$ induces a discretization $\check{\TT}_h=\{\check{T}_1,\dots,\check{T}_n\}$ on the parameter domain $[a,b]$.
Let $a=\check{z}_0<\check{z}_1<\dots<\check{z}_n$ be the endpoints of the elements of $\check{\TT}_h$. 
We assume $\check{T}_j=[\check{z}_{j-1},\check{z}_j]$, $\gamma(\check{T_j})=T_j$ and $\gamma(\check{z}_j)=x_j$. 
We define $\check{\mathcal{N}}_h:=\set{\check{z}_j}{j=1,\dots,n}$ for closed $\Gamma=\partial\Omega$, and $\check{\mathcal{N}}_h:=\set{\check{z}_j}{j=0,\dots,n}$ for open $\Gamma\subsetneqq \partial\Omega$.
The length of each element $\check{T}\in \check{\TT}_h$ is denoted by $h_{\check{T}}$.
Moreover, we define the \textit{shape regularity constant} on $[a,b]$ as 
\begin{align*}
\kappa(\check{\mathcal{T}_h})&:=\max\Big(\set{h_{\check{T}}/h_{\check{T}'}}{\check{T},\check{T}'\in\check{\mathcal{T}_h}, \gamma(\check{T})\cap \gamma(\check{T}')\neq \emptyset}\Big).
\end{align*}


\section{A~posteriori error estimate}
\label{section:aposteriori}


\subsection{Main theorem}\label{sec:main theorem}
For $T\in\TT_h$, we inductively define the patch $\omega_h^m(T)\subseteq \Gamma$ of order $m\in\N_0$ by
\begin{align}\label{eq:patch}
 \omega_h^0(T) := T,\quad
 \omega_h^{m+1}(T) := \bigcup\set{T'\in\TT_h}{T'\cap\omega_h^m(T)\neq\emptyset}.
\end{align}
The main result of {Theorem \ref{thm:faermann}} requires the following two assumptions on $\TT_h$ and $\XX_h$
for some fixed integer $m\in\N_0$:
\begin{itemize}
\item[(A1)] For each $T\in\TT_h$, there exists some fixed function $\psi_T\in\XX_h$ with 
connected support $\supp(\psi_T)$ such that
\begin{align}\label{eq:phiT}
 T \subseteq \supp(\psi_T) \subseteq \omega_h^m(T).
\end{align}
\item[(A2)] There exists some constant $q\in (0,1]$ such that
\begin{align}\label{eq:contraction}
 \norm{1-\psi_T}{L^2(\supp(\psi_T))}^2 \le (1-q)\, |\supp(\psi_T)|
 \quad\text{for all }T\in\TT_h.
\end{align}
\end{itemize}
With these assumptions, we can formulate the following theorem which states 
validity of~\eqref{eq:faermann}. For standard BEM and piecewise polynomials based on the arclength parametrization $\gamma_{L}$ of some closed boundary $\Gamma=\partial\Omega$, the analogous result is first proved in \cite[Theorem 3.1]{faermann2d}
\begin{theorem}\label{thm:faermann}
The residual $r_h=f-V\phi_h$ satisfies { the efficiency estimate}
\begin{align}\label{eq1:thm:faermann}
 \eta_h:=\Big(\sum_{{z}\in\NN_h}|r_h|_{H^{1/2}(\omega_h({z}))}^2\Big)^{1/2}
 \le \Ceff\,\norm{\phi-\phi_h}{\H^{-1/2}(\Gamma_{})}.
\end{align}
If the mesh $\TT_h$ and the discrete space $\XX_h$ satisfy 
assumptions~{\rm(A1)--(A2)}, also the reliability estimate
\begin{align}\label{eq2:thm:faermann}
\norm{\phi-\phi_h}{\H^{-1/2}(\Gamma_{})}
\le \Crel\,\eta_h
\end{align}
holds.
The constant $\Ceff>0$ depends only on $V$, while $\Crel>0$ 
holds additionally on $\Gamma_{}$, $m$, $\kappa(\TT_h)$, and $q$.
\end{theorem}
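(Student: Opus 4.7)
The plan is to first pass to the residual and then localize. Since $V:\H^{-1/2}(\Gamma)\to H^{1/2}(\Gamma)$ is an elliptic isomorphism with $V(\phi-\phi_h)=r_h$, coercivity and continuity of $V$ give
\[
\norm{\phi-\phi_h}{\H^{-1/2}(\Gamma)}\simeq\norm{r_h}{H^{1/2}(\Gamma)},
\]
so the theorem reduces to proving $\eta_h\simeq\norm{r_h}{H^{1/2}(\Gamma)}$. The efficiency bound $\eta_h\lesssim\norm{r_h}{H^{1/2}(\Gamma)}$ is free of (A1) and (A2) and follows from finite overlap: each pair $(x,y)$ with $x\in T$, $y\in T'$ lies in $\omega_h(z)\times\omega_h(z)$ only for common nodes $z$ of $T$ and $T'$, of which there are at most two, so $\eta_h^2\lesssim|r_h|_{H^{1/2}(\Gamma)}^2\le\norm{r_h}{H^{1/2}(\Gamma)}^2$.

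For the reliability direction, the technical core is a local Poincar\'e-type estimate obtained from (A1)--(A2) together with Galerkin orthogonality. For each $T\in\TT_h$, the test function $\psi_T\in\XX_h$ from (A1) satisfies $\int_\Gamma r_h\psi_T\,dx=0$, and together with (A2) and Cauchy-Schwarz this yields
\[
\Bigl|\int_{\supp(\psi_T)}r_h\,dx\Bigr|=\Bigl|\int_{\supp(\psi_T)}r_h(1-\psi_T)\,dx\Bigr|\le\sqrt{1-q}\,\norm{r_h}{L^2(\supp(\psi_T))}\,|\supp(\psi_T)|^{1/2}.
\]
I would then decompose $r_h$ on the connected set $\supp(\psi_T)$ into its integral mean plus a mean-zero remainder, apply the elementary one-dimensional Poincar\'e inequality $\norm{u-\bar u}{L^2(I)}^2\le|I|\,|u|_{H^{1/2}(I)}^2$ on the pulled-back interval $\gamma_L^{-1}(\supp(\psi_T))$, and transfer back via the bi-Lipschitz equivalence~\eqref{eq:equivalent Hsnorm}. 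The factor $1-q<1$ absorbs the mean contribution into the left-hand side, producing
\[
q\,\norm{r_h}{L^2(T)}^2\le q\,\norm{r_h}{L^2(\supp(\psi_T))}^2\lesssim h_T\,|r_h|_{H^{1/2}(\supp(\psi_T))}^2,
\]
with constants depending only on $\Gamma$, $m$, and $\kappa(\TT_h)$.

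Finally I would decompose
\[
|r_h|_{H^{1/2}(\Gamma)}^2=\sum_{T,T'\in\TT_h}\int_T\int_{T'}\frac{|r_h(x)-r_h(y)|^2}{|x-y|^2}\,dy\,dx
\]
into a near part over pairs $(T,T')$ sharing at least one node, immediately bounded by $\eta_h^2$ via finite overlap, and a far part over disjoint pairs. For the far part, the bi-Lipschitz estimate~\eqref{eq:bi-Lipschitz} and shape regularity give $|x-y|\gtrsim h_T$ whenever $x\in T$, $y\in T'$, $T\cap T'=\emptyset$; a Fubini computation in arclength then bounds the far part by $\sum_T h_T^{-1}\,\norm{r_h}{L^2(T)}^2$, which combined with the local Poincar\'e estimate and a finite-overlap covering of each $\supp(\psi_T)\subseteq\omega_h^m(T)$ by node patches produces the desired $\lesssim\eta_h^2$. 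The $L^2$ part is controlled analogously, so that $\norm{r_h}{H^{1/2}(\Gamma)}^2=\norm{r_h}{L^2(\Gamma)}^2+|r_h|_{H^{1/2}(\Gamma)}^2\lesssim\eta_h^2$. I expect the main obstacle to be the bookkeeping in this last covering step, because for $m\ge 1$ the seminorm $|r_h|_{H^{1/2}(\supp(\psi_T))}^2$ contains cross-patch pairs not immediately captured by any single $\omega_h(z)\times\omega_h(z)$, and the mesh-size condition~\eqref{eq:h small} is needed to guarantee that~\eqref{eq:bi-Lipschitz} applies uniformly to all pairs arising in the closed case.
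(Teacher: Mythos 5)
Your efficiency argument and the overall reliability architecture coincide with the paper's: passing to $r_h$ via the isomorphism, the finite-overlap bound is Proposition~\ref{prop:efficient 2}; the near/far splitting with the far and $L^2$ parts controlled by $\sum_{T} h_T^{-1}\norm{r_h}{L^2(T)}^2$ is Lemma~\ref{lem:Hnorm le Faer plus}; and your orthogonality/Cauchy--Schwarz/absorption step reproduces the first half of the proof of Lemma~\ref{lem:Lnorm le h Hsnorm}, arriving at $q\,\norm{r_h}{L^2(\supp(\psi_T))}^2\le\tfrac{1}{2}|\supp(\psi_T)|\,|r_h\circ\gamma_L|_{H^{1/2}(I)}^2$ with $I=\gamma_L^{-1}(\supp(\psi_T))$.

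The genuine gap is precisely the step you dismiss as bookkeeping. For $m\ge1$ there is no ``finite-overlap covering'' that bounds $|r_h|_{H^{1/2}(\supp(\psi_T))}^2$ by $\sum_{z}|r_h|_{H^{1/2}(\omega_h(z))}^2$, because the Sobolev--Slobodeckij seminorm is not subadditive with respect to a cover: the cross terms between non-touching elements of $\omega_h^m(T)$ lie in no set $\omega_h(z)\times\omega_h(z)$, and this is exactly the case needed for the NURBS application, where $m=\lceil p/2\rceil\ge1$ once $p\ge1$. The paper closes this by a separate induction (inequality~\eqref{eq:Hsnorm le Hsnorm local ass} inside the proof of Lemma~\ref{lem:Lnorm le h Hsnorm}): with $\check U(r,s)=|u(\gamma(r))-u(\gamma(s))|^2/|r-s|^2$, a distant cross term is estimated through an intermediate element by averaging $\check U(r,s)\le 2\check U(r,t)+2\check U(t,s)$ over $t$ in the element next to the one containing $s$; shape regularity controls the resulting length ratios, and induction over the number of elements yields
\[
|u\circ\gamma|_{H^{1/2}([\check z_{j-1},\check z_{j+\ell}])}^2
\le \big(1+2\kappa(\TT_h)\big)^{\ell-1}\sum_{k=j}^{j+\ell-1}|u\circ\gamma|_{H^{1/2}(\check T_k\cup\check T_{k+1})}^2,
\]
after which each pair $\check T_k\cup\check T_{k+1}$ is mapped into a node patch $\omega_h(z_k)$ via~\eqref{eq:equivalent Hsnorm}. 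This is where the dependence of $\Crel$ on $m$ and $\kappa(\TT_h)$ actually enters, and without this (or an equivalent) argument your chain does not close for $m\ge1$; for $m=0$ your proof is complete and agrees with the paper's. A further detail the paper handles through the choice $\ell=\min\{2m,n-1\}$ together with~\eqref{eq:h small}: for closed $\Gamma$ the patch $\omega_h^m(T)$ may wrap around the boundary, so the pulled-back interval must be capped before the arclength equivalence~\eqref{eq:equivalent Hsnorm} can be applied.
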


\begin{remark}
The proof reveals that the efficiency estimate~\eqref{eq1:thm:faermann} is
valid for \emph{any} approximation $\phi_h$ of $\phi$, while the upper
reliability estimate~\eqref{eq2:thm:faermann} requires some Galerkin orthogonality.
\end{remark}
\subsection{Proof of efficiency estimate~(\ref{eq1:thm:faermann})}\quad
The elementary proof of the following proposition is already found in \cite[page 208]{faermann2d}. 
It is found as well in \cite[Theorem 2.12]{diplarbeit}.
\begin{proposition}\label{prop:efficient 2}
For each $u\in H^{1/2}(\Gamma_{})$, it holds
\begin{align}\label{eq:efficiency}
\sum_{{z}\in\NN_h}|u|_{H^{1/2}(\omega_h({z}))}^2
\le 2\,\norm{u}{H^{1/2}(\Gamma_{})}^2.
\end{align}
\end{proposition}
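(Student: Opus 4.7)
The plan is to interchange the finite sum over vertices with the double integral defining each seminorm and then reduce the claim to a pointwise multiplicity bound. Concretely, I would rewrite
\begin{align*}
 \sum_{z\in\NN_h}|u|_{H^{1/2}(\omega_h(z))}^2
 = \int_{\Gamma}\int_{\Gamma}\frac{|u(x)-u(y)|^2}{|x-y|^2}\,N(x,y)\,dy\,dx,
\end{align*}
where $N(x,y):=\#\set{z\in\NN_h}{x,y\in\omega_h(z)}$ counts how many node patches simultaneously contain $x$ and $y$. The proposition would then follow directly from the pointwise estimate $N(x,y)\le 2$ almost everywhere on $\Gamma\times\Gamma$.

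The combinatorial core of the argument is the following observation: by definition of the patch, $x\in\omega_h(z)$ is equivalent to $z$ being a vertex of some element $T\in\TT_h$ that contains $x$. For almost every $x\in\Gamma$, namely whenever $x$ lies in the interior of a unique element $T$ with endpoints $z_1,z_2\in\NN_h$, the set of admissible $z$ is exactly $\{z_1,z_2\}$. Hence $\#\set{z\in\NN_h}{x\in\omega_h(z)}\le 2$ for a.e.\ $x\in\Gamma$, and therefore $N(x,y)\le 2$ for a.e.\ $(x,y)\in\Gamma\times\Gamma$. Inserting this bound into the identity above yields
\begin{align*}
 \sum_{z\in\NN_h}|u|_{H^{1/2}(\omega_h(z))}^2
 \le 2\,|u|_{H^{1/2}(\Gamma)}^2
 \le 2\,\norm{u}{H^{1/2}(\Gamma)}^2,
\end{align*}
as claimed.

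There is no substantive obstacle; the whole estimate reduces to a one-line counting argument for one-dimensional meshes, in which each interior point of $\Gamma$ lies in exactly one element and each element has precisely two endpoints. The only point requiring mild attention is the open case $\Gamma\subsetneqq\partial\Omega$, where the patch at an endpoint of $\Gamma$ contains only a single element, so that the multiplicity is $1$ rather than $2$ at certain points; this can only decrease $N(x,y)$ and is therefore harmless. Notably, the argument uses neither shape regularity of $\TT_h$ nor any property of the parametrization $\gamma$, which is consistent with the efficiency estimate being universal and independent of any Galerkin orthogonality, as highlighted in Remark~\ref{thm:faermann} and the subsequent remark.
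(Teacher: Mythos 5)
Your proof is correct and is essentially the same counting argument as the cited elementary proof of Faermann: almost every point of $\Gamma$ lies in at most two node patches, so the overlap multiplicity $N(x,y)\le 2$, and interchanging the (finite, nonnegative) sum with the double integral via Tonelli yields the factor $2$, exactly as one gets by expanding each patch seminorm into its two element contributions and the cross term and observing that each element is counted twice. One minor inaccuracy: for open $\Gamma$ the endpoints $z_0,z_n$ belong to $\NN_h$, so interior points of the boundary elements are still contained in exactly two patches; your remark that the multiplicity drops to $1$ there is not quite right, but since it only errs in the harmless direction it does not affect the bound.
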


\begin{proof}[Proof of Theorem~\ref{thm:faermann}, eq.~\eqref{eq1:thm:faermann}]
Since $V$ is an isomorphism, the residual $r_{h}=f-V\phi_h = V(\phi-\phi_h)$
satisfies $\norm{r_h}{H^{1/2}(\Gamma_{})}\simeq \norm{\phi-\phi_h}{\H^{-1/2}(\Gamma_{})}$,
where the hidden constants depend only on $V$. Together 
with~\eqref{eq:efficiency}, this proves~\eqref{eq1:thm:faermann}.
\end{proof}

\subsection{Proof of reliability estimate~(\ref{eq2:thm:faermann})}
\quad  We start with the following lemma.
{For the elementary (but long) proof, we refer to \cite[Lemma 2.3]{faermann2d}.
A detailed proof is also found in \cite[Proposition 2.13]{diplarbeit}.}
\begin{lemma}\label{lem:Hnorm le Faer plus}
There exists a constant $C_1>0$ such that for all $u\in H^{1/2}(\Gamma_{})$
\begin{align*}
\norm{u}{H^{1/2}(\Gamma_{})}^2\leq \sum_{{z}\in \mathcal{N}_h} |u|_{H^{1/2}(\omega_h({z}))}^2+C_1\sum_{T\in\mathcal{T}_h} h_T^{-1}\norm{u}{L^2(T)}^2,
\end{align*}
The constant only depends on $\Gamma_{}$ and $\kappa(\TT_h)$.
\end{lemma}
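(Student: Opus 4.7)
The plan is to handle the $L^2$-part and the Sobolev--Slobodeckij seminorm in $\norm{u}{H^{1/2}(\Gamma)}^2=\norm{u}{L^2(\Gamma)}^2+|u|_{H^{1/2}(\Gamma)}^2$ separately. The $L^2$-part comes almost for free: since $h_T\le h\le|\Gamma|$, one has $\norm{u}{L^2(\Gamma)}^2=\sum_{T\in\TT_h}\norm{u}{L^2(T)}^2\le|\Gamma|\sum_{T\in\TT_h}h_T^{-1}\norm{u}{L^2(T)}^2$. All the work therefore lies in estimating the double integral
\begin{align*}
|u|_{H^{1/2}(\Gamma)}^2=\sum_{T,T'\in\TT_h}\int_T\int_{T'}\frac{|u(x)-u(y)|^2}{|x-y|^2}\,dy\,dx.
\end{align*}

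The key idea is to split the pairs $(T,T')$ into \emph{close} pairs (those with $T\cap T'\neq\emptyset$, i.e.\ either $T=T'$ or $T,T'$ share a common node $z$) and \emph{far} pairs. For every close pair one has $T\cup T'\subseteq\omega_h(z)$ for some shared node $z$; grouping the close contributions by that node yields
\begin{align*}
\sum_{(T,T')\text{ close}}\int_T\int_{T'}\frac{|u(x)-u(y)|^2}{|x-y|^2}\,dy\,dx\le\sum_{z\in\NN_h}|u|_{H^{1/2}(\omega_h(z))}^2,
\end{align*}
the only bookkeeping subtlety being that the diagonal $T=T'$ terms are counted twice, once per endpoint of $T$, which only helps the inequality.

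For the far pairs I would use $|u(x)-u(y)|^2\le 2|u(x)|^2+2|u(y)|^2$ together with the symmetry $T\leftrightarrow T'$ to reduce everything to the one-sided kernel estimate
\begin{align*}
\sum_{T'\text{ far from }T}\int_{T'}\frac{dy}{|x-y|^2}\le C\,\kappa(\TT_h)\,h_T^{-1}\quad\text{for every }T\in\TT_h\text{ and }x\in T.
\end{align*}
This is where the bi-Lipschitz estimate~\eqref{eq:bi-Lipschitz} of $\gamma_L$ comes in: transporting to the arclength variable yields $|x-y|\ge\Cgamma^{-1}|s_x-t|$. For closed $\Gamma$ the periodic distance on the parameter interval must be used, and the standing assumption $h\le|\Gamma|/4$ guarantees that the relevant parametric distances lie inside the admissible range $|s-t|\le\tfrac{3}{4}|\Gamma|$. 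The far parameter region excludes a neighborhood of $s_x$ of length at least $\min(h_{T_{i-1}},h_{T_{i+1}})$, so the elementary tail integral $\int_\delta^{|\Gamma|/2}r^{-2}\,dr\le 2/\delta$ combined with the shape-regularity bound $\min(h_{T_{i\pm1}})\ge\kappa(\TT_h)^{-1}h_T$ closes the estimate.

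Summing over $x\in T$ and $T\in\TT_h$ absorbs the far-pair contribution into $C\sum_T h_T^{-1}\norm{u}{L^2(T)}^2$, which together with the $L^2$-part and the close-pair bound yields the claim. The main obstacle is the careful combinatorial bookkeeping for the close pairs and the correct handling of the periodic distance on closed $\Gamma$; the open case is strictly simpler because there is no wrap-around.
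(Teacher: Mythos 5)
Your argument is correct and is essentially the classical near-field/far-field splitting that the paper itself does not reproduce but cites (Faermann's Lemma~2.3, resp.\ the detailed version in the thesis): close pairs are regrouped into the node patches with constant~$1$, and far pairs are absorbed into the weighted $L^2$-terms via $|u(x)-u(y)|^2\le 2|u(x)|^2+2|u(y)|^2$, the bi-Lipschitz estimate~\eqref{eq:bi-Lipschitz}, and shape regularity. The only cosmetic imprecision is the role of $h\le|\Gamma|/4$: the periodic parametric distance is automatically $\le|\Gamma|/2\le\tfrac34|\Gamma|$, and the assumption is rather needed so that the mesh has at least four elements and the adjacency/far-field structure does not wrap around.
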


Our next goal is to bound $\sum_{T\in\mathcal{T}_h} h_T^{-1}\norm{u}{L^2(T)}^2$.
To this end we need the following Poincar\'{e}-type inequality from \cite[Lemma 2.5]{faermann2d}.

\begin{lemma}\label{lem:Poincare}
Let $I\subset\R$ be a finite interval  with length $|I|>0$. 
Then, there holds
\begin{align*}
\norm{u}{L^2(I)}^2\leq \frac{|I|}{2}|u|_{H^{1/2}(I)}^2+\frac{1}{|I|}\left|\int_I u(t)\,dt \right|^2{\quad \text{for all }u\in L^2(I)}.
\end{align*}
\end{lemma}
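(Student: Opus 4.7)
The plan is to split $u$ into its integral mean $\bar u := \frac{1}{|I|}\int_I u(t)\,dt$ plus the zero-mean remainder $u-\bar u$. By orthogonality of constants against zero-mean functions in $L^2(I)$, one has
\begin{align*}
\norm{u}{L^2(I)}^2 = \norm{u-\bar u}{L^2(I)}^2 + |I|\,\bar u^2,
\end{align*}
and since $|I|\,\bar u^2 = \frac{1}{|I|}\bigl|\int_I u(t)\,dt\bigr|^2$, the mean-value term on the right-hand side of the claim is already accounted for. It therefore remains to dominate the oscillation $\norm{u-\bar u}{L^2(I)}^2$ by $\frac{|I|}{2}|u|_{H^{1/2}(I)}^2$.

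For this I would employ the classical ``variance as double integral'' identity
\begin{align*}
\norm{u-\bar u}{L^2(I)}^2 = \frac{1}{2|I|}\int_I\int_I |u(x)-u(y)|^2\,dy\,dx,
\end{align*}
which follows by expanding $|u(x)-u(y)|^2 = u(x)^2 - 2u(x)u(y) + u(y)^2$ under the double integral and recognising $\int_I u = |I|\bar u$. The Sobolev--Slobodeckij seminorm then enters via the pointwise bound $|u(x)-u(y)|^2 \le |I|^2\,\frac{|u(x)-u(y)|^2}{|x-y|^2}$, which holds because $|x-y|\le |I|$ for all $x,y\in I$. Substituting produces the prefactor $\frac{|I|^2}{2|I|} = \frac{|I|}{2}$ in front of $|u|_{H^{1/2}(I)}^2$, which is precisely the constant stated in the lemma.

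There is no genuine obstacle in this argument: the orthogonal decomposition and the variance identity are elementary and require nothing beyond $u\in L^2(I)$, while the passage to the seminorm is simply the trivial weight $|x-y|^2/|I|^2 \le 1$. If $|u|_{H^{1/2}(I)}=\infty$ the inequality is vacuous, and otherwise all integrals are finite by $(a-b)^2\le 2a^2+2b^2$, so Fubini is unproblematic. Any constant loss incurred by the weight bound $|x-y|\le |I|$ could in principle be sharpened (one easily gets $|I|/3$ via $\int_I(x-y)^2\,dy\le |I|^3/3$ and Cauchy--Schwarz), but the lemma only asks for $|I|/2$, so the direct route above suffices.
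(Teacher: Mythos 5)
Your proof is correct: the orthogonal decomposition $\norm{u}{L^2(I)}^2=\norm{u-\bar u}{L^2(I)}^2+\frac{1}{|I|}\bigl|\int_I u\,dt\bigr|^2$, the variance identity $\norm{u-\bar u}{L^2(I)}^2=\frac{1}{2|I|}\int_I\int_I|u(x)-u(y)|^2\,dy\,dx$, and the trivial bound $|x-y|\le|I|$ yield exactly the stated constant $\frac{|I|}{2}$, and your remarks on finiteness and the vacuous case are fine. The paper itself does not prove this lemma but cites Faermann's Lemma~2.5, and your argument is the standard elementary one underlying that reference, so there is nothing further to compare.
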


\begin{lemma}\label{lem:Lnorm le h Hsnorm}
Suppose the assumptions~{\rm(A1)--(A2)}. 
Let $u\in H^{1/2}(\Gamma)$ satisfy
\begin{align}\label{eq:orthogonal}
 \int_{\Gamma_{}} u(x)\psi_T(x)\,dx = 0 \quad\text{for all }T\in\TT_h.
\end{align}
Then, there exists a constant $C_2>0$ which depends only on $\Gamma_{}$, $m$, 
$\kappa(\TT_h)$, and $q$ such that for all $T\in \mathcal{T}_h$
\begin{align}
\begin{aligned}
\norm{u}{L^2(T)}^2 & \leq C_2 h_T|u|_{H^{1/2}(T)}^2 \quad &\text{if } m=0,\\
\norm{u}{L^2(\mathrm{supp}(\psi_T))}^2&\le C_2|\mathrm{supp}(\psi_T)| \sum_{{z} \in \omega_h^{m-1}(T)\cap \mathcal{N}_h}|u|_{H^{1/2}(\omega_h({z}))}^2 \quad &\text{if } m>0.
\end{aligned}
\end{align}
\end{lemma}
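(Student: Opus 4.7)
My plan is to apply a Poincar\'e-type inequality on the set $\omega:=\supp(\psi_T)$, using the orthogonality (\ref{eq:orthogonal}) to control the mean value of $u$ on $\omega$ and exploiting (A2) to absorb a lower-order $L^2$-term through the constant $q$. The $m=0$ case then comes out directly from this estimate, while the $m>0$ case needs an additional decomposition step that separates the $H^{1/2}$-seminorm on $\omega$ into contributions from the individual node patches.

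To this end, I would first rewrite $\int_\omega u\,dx=\int_\omega u(1-\psi_T)\,dx$ using (\ref{eq:orthogonal}), and then apply Cauchy--Schwarz together with (A2) to obtain
\begin{equation*}
\Big|\int_\omega u\,dx\Big|^2 \le \|1-\psi_T\|_{L^2(\omega)}^2\,\|u\|_{L^2(\omega)}^2 \le (1-q)\,|\omega|\,\|u\|_{L^2(\omega)}^2.
\end{equation*}
Transporting to the parameter interval $\check\omega:=\gamma_L^{-1}(\omega)$ (where $|\check\omega|=|\omega|$ since $\gamma_L$ is the arclength parametrization) and using (\ref{eq:equivalent Hsnorm}), Lemma~\ref{lem:Poincare} applied to $u\circ\gamma_L$ yields
\begin{equation*}
\|u\|_{L^2(\omega)}^2 \le \tfrac{1}{2}C_\Gamma^2\,|\omega|\,|u|_{H^{1/2}(\omega)}^2 + (1-q)\,\|u\|_{L^2(\omega)}^2,
\end{equation*}
and absorbing the last term gives $\|u\|_{L^2(\omega)}^2 \le \frac{C_\Gamma^2\,|\omega|}{2q}\,|u|_{H^{1/2}(\omega)}^2$. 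For $m=0$, assumption (A1) forces $T\subseteq\omega\subseteq T$, hence $\omega=T$ and $|\omega|=h_T$, which is exactly the first claim.

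For $m>0$, the remaining task is to bound $|u|_{H^{1/2}(\omega)}^2$ from above by $\sum_{z\in\omega_h^{m-1}(T)\cap\NN_h}|u|_{H^{1/2}(\omega_h(z))}^2$. Since $\omega\subseteq\omega_h^m(T)=\bigcup_z\omega_h(z)$, the double integral defining the seminorm splits into a \emph{local} part, where both points lie in a common node patch and which is directly majorized by the Faermann sum, and a \emph{cross} part, where the two points lie in disjoint node patches so that shape regularity forces $|x-y|\gtrsim h_T$. On the cross part one uses $|u(x)-u(y)|^2\lesssim|u(x)|^2+|u(y)|^2$ to obtain a contribution of order $h_T^{-1}\|u\|_{L^2(\omega)}^2$, which when fed back into the $L^2$-estimate above and combined with $|\omega|\lesssim h_T$ (with a constant depending only on $m$ and $\kappa(\TT_h)$) closes the estimate. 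I expect the principal obstacle to be exactly this closing step: the size of the cross terms and the Poincar\'e factor $1/q$ must be jointly small enough so that the $L^2$-term can be absorbed once more, and this is where the explicit dependence of $C_2$ on $\Gamma$, $m$, $\kappa(\TT_h)$, and $q$ is forced.
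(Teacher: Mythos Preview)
Your first step---Poincar\'e on the parameter interval combined with the orthogonality~\eqref{eq:orthogonal} and assumption~(A2) to reach
\[
\|u\|_{L^2(\omega)}^2 \;\le\; \frac{C_\Gamma^2\,|\omega|}{2q}\,|u\circ\gamma_L|_{H^{1/2}(\check\omega)}^2
\]
---matches the paper exactly, and the case $m=0$ is correctly concluded from it.

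For $m>0$, however, the closing argument you sketch does not go through, and the obstacle you flag is real and fatal. Feeding the cross-term bound ${\rm cross}\lesssim h_T^{-1}\|u\|_{L^2(\omega)}^2$ back into the Poincar\'e estimate produces a right-hand side of the form
\[
\frac{C_\Gamma^2\,|\omega|}{2q}\,\big(\text{local sum}\big)
\;+\;\frac{C_\Gamma^2\,|\omega|}{2q}\cdot\frac{C'}{h_T}\,\|u\|_{L^2(\omega)}^2,
\]
and absorption of the last term requires the coefficient $\frac{C_\Gamma^2\,C'\,|\omega|}{2q\,h_T}$ to be strictly less than~$1$. But $q\in(0,1]$ is a fixed structural constant from~(A2), unrelated to $C_\Gamma$, $m$, and $\kappa(\TT_h)$; for small $q$ or large $m,\kappa$ this coefficient is $\ge 1$ and nothing can be absorbed. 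The statement's dependence ``$C_2$ depends on $q$'' means the final bound \emph{degenerates} as $q\to 0$; it does not give you a second absorption.

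The paper sidesteps this circularity by localizing the seminorm $|u\circ\gamma_L|_{H^{1/2}(I)}^2$ \emph{directly} into node-patch contributions, without ever producing an $L^2$ residue. The key device is the pointwise inequality
\[
\check U(r,s)\;\le\;2\,\check U(r,t)+2\,\check U(t,s)
\qquad(r<t<s),
\]
for the Sobolev--Slobodeckij integrand $\check U(r,s)=|u(\gamma(r))-u(\gamma(s))|^2/|r-s|^2$. Averaging $t$ over an intermediate element turns a ``far'' double integral into two ``nearer'' ones; iterating this by induction on the number $\ell$ of subintervals yields
\[
|u\circ\gamma_L|_{H^{1/2}([\check z_{j-1},\check z_{j+\ell}])}^2
\;\le\;(1+2\kappa(\TT_h))^{\ell-1}\sum_{k=j}^{j+\ell-1}
|u\circ\gamma_L|_{H^{1/2}(\check T_k\cup\check T_{k+1})}^2,
\]
which is precisely the localization you need, with an explicit constant and no $L^2$-term to reabsorb. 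This is the missing idea.
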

\begin{proof}[Proof of Lemma \ref{lem:Lnorm le h Hsnorm} for closed $\Gamma_{}=\partial\Omega$]
The assertion is formulated on the boundary itself. 
Without loss of generality, we may therefore assume that $\gamma=\gamma_L$.
 Since $\mathrm{supp}(\psi_T)$ is connected, there is an interval $I$ of length $|I|\le L$ with $\gamma(I)=\mathrm{supp}(\psi_T)$.  
We use Lemma~\ref{lem:Poincare} and get
\begin{equation*}
\norm{u\circ\gamma}{L^2(I)}^2\leq\frac{|I|}{2}|u\circ\gamma|_{H^{1/2}(I)}^2+\frac{1}{|I|}\left|\int_{I}{u\circ\gamma(t)}\,d{t}\right |^2.
\end{equation*}
 With the orthogonality \eqref{eq:orthogonal} and  Assumption {\rm(A2)}, we  see
\begin{align*}
\left|\int_{I}{u\circ\gamma(t)}\,d{t}\right |^2 &= \left|\int_{\mathrm{supp}(\psi_T)}{u(y)(1-\psi_T(y))}\,d{y}\right|^2
=\left|\int_{I}{\big(u\circ\gamma(t)\big)\big(1-\psi_T\circ\gamma(t)\big)}\,d{t}\right|^2\\
&\leq \norm{1-(\psi_T\circ\gamma)}{L^2(I)}^2\norm{u\circ\gamma}{L^2(I)}^2 
\leq(1-q) |I| \norm{u\circ\gamma}{L^2(I)}^2.
\end{align*}
Using the {last two inequalities}, we therefore get
\begin{equation*}
\norm{u\circ\gamma}{L^2(I)}^2\leq\frac{|I|}{2} |{u\circ\gamma}|_{H^{1/2}(I)}^2+(1-q)\norm{u\circ\gamma}{L^2(I)}^2.
\end{equation*}
{Together with $|I|=|\gamma(I)|=|\mathrm{supp} (\psi_T)|$, this implies}
\begin{equation}\label{eq:Lnorm patch leq Hseminorm[1/2] I}
\norm{u}{L^{2}(\mathrm{supp}(\psi_T))}^2\leq \frac{|\mathrm{supp}(\psi_T)|}{2q}|{u\circ\gamma}|_{H^{1/2}(I)}^2.
\end{equation}
{For $m=0$, \eqref{eq:h small}, {\rm (A1)} and \eqref{eq:equivalent Hsnorm}, \eqref{eq:Lnorm patch leq Hseminorm[1/2] I} conclude the proof with $C_2=\Cgamma^2/2q$}.
To estimate $|u\circ\gamma|_{H^{1/2}(I)}^2$ for $m>0$, we use induction on $\ell$ to prove the following assertion for all $\ell \in \N$:
\begin{align}\label{eq:Hsnorm le Hsnorm local ass}
\forall j\in\Z \quad |{u\circ\gamma}|_{H^{1/2}([\check{{z}}_{j-1},\check{{z}}_{j+\ell}])}^2\leq (1+2\kappa(\mathcal{T}_h))^{\ell-1}\sum_{k=j}^{j+\ell-1}|{u\circ\gamma}|_{H^{1/2}(\check{T}_{k}\cup\check{T}_{k+1})}^2.
\end{align}
For $\ell=1$, {\eqref{eq:Hsnorm le Hsnorm local ass} even holds with equality}. 
The induction hypothesis for $\ell-1\geq 1$ is 
\begin{equation}\label{eq:Hsnorm le Hsnorm local hyp}
\forall j\in \Z   \quad |{u\circ\gamma}|_{H^{1/2}([\check{{z}}_{j-1},\check{{z}}_{j+\ell-1}])}^2\leq (1+2\kappa(\mathcal{T}_h))^{\ell-2}\sum_{k=j}^{j+\ell-2}|{u\circ\gamma}|_{H^{1/2} (\check{T}_{k}\cup\check{T}_{k+1})}^2.
\end{equation}
For $r,s \in \R$, let
\begin{equation*}
\check{U}(r,s):=\frac{|u(\gamma(r))-u(\gamma(s))|^2}{|r-s|^2}.
\end{equation*}
For $j\in \Z$, {the definition of the Sobolev-Slobodeckij seminorm \eqref{eq:SS-norm} shows}{
\begin{align}\label{eq:ind step Hseminorm[1/2] leq}
\begin{split}
&|{u\circ\gamma}|_{H^{1/2}([\check{{z}}_{j-1},\check{{z}}_{j+\ell}])}^2  =  \int_{[\check{{z}}_{j-1},\check{{z}}_{j+\ell-1}]}\int_{[\check{{z}}_{j-1},\check{{z}}_{j+\ell-1}]}{\check{U}(r,s)}\,d{r}\,d{s}  \\
&\quad\quad+ \int_{[\check{{z}}_{j+\ell-1},\check{{z}}_{j+\ell}]}\int_{[\check{{z}}_{j+\ell-1},\check{{z}}_{j+\ell}]}{\check{U}(r,s)}\,d{r}\,d{s}  +  2\int_{[\check{{z}}_{j+\ell-1},\check{{z}}_{j+\ell}]}\int_{[\check{{z}}_{j-1},\check{{z}}_{j+\ell-1}]}{\check{U}(r,s)}\,d{r}\,d{s}\\
&\quad=|{u\circ\gamma}|_{H^{1/2}([\check{{z}}_{j-1},\check{{z}}_{j+\ell-1}])}^2+|{u\circ\gamma}|_{H^{1/2}([\check{{z}}_{j+\ell-1},\check{{z}}_{j+\ell}])}^2  \\
&\quad\quad+ 2\int_{[\check{{z}}_{j+\ell-1},\check{{z}}_{j+\ell}]}\int_{[\check{{z}}_{j+\ell-2},\check{{z}}_{j+\ell-1}]}{\check{U}(r,s)}\,d{r}\,d{s}  +
2\int_{[\check{{z}}_{j+\ell-1},\check{{z}}_{j+\ell}]}\int_{[\check{{z}}_{j-1},\check{{z}}_{j+\ell-1}]}{\check{U}(r,s)}\,d{r}\,d{s}\\
&\quad\leq|{u\circ\gamma}|_{H^{1/2}([\check{{z}}_{j-1},\check{{z}}_{j+\ell-1}])}^2+|{u\circ\gamma}|_{H^{1/2}([\check{{z}}_{j+\ell-2},\check{{z}}_{j+\ell}])}^2\\
&\quad\quad+2\int_{[\check{{z}}_{j+\ell-1},\check{{z}}_{j+\ell}]}\int_{[\check{{z}}_{j-1},\check{{z}}_{j+\ell-1}]}{\check{U}(r,s)}\,d{r}\,d{s}.
\end{split}
\end{align}}
For $r <t <s \in \R$ , we have 
\begin{equation*}
\check{U}(r,s)\leq 2\frac{|u(\gamma(r))-u(\gamma(t))|^2}{|r-s|^2}+2\frac{|u(\gamma(t))-u(\gamma(s))|^2}{|r-s|^2}\leq 2\check{U}(r,t)+2\check{U}(t,s).
\end{equation*}
{ With the abbreviate notation} $h_k:=h_{\check{T}_k}$, it hence follows
{\begin{align*}
&\int_{[\check{{z}}_{j+\ell-1},\check{{z}}_{j+\ell}]}\int_{[\check{{z}}_{j-1},\check{{z}}_{j+\ell-1}]}{\check{U}(r,s)}\,d{r}\,d{s}\\&\quad=\frac{1}{h_{j+\ell-1}}\int_{[\check{{z}}_{j+\ell-2},\check{{z}}_{j+\ell-1}]}{\int_{[\check{{z}}_{j+\ell-1},\check{{z}}_{j+\ell}]}\int_{[\check{{z}}_{j-1},\check{{z}}_{j+\ell-2}]}{\check{U}(r,s)}\,d{r}\,d{s}}\,d{t}\\
&\quad\leq \frac{2}{h_{j+\ell-1}}\int_{[\check{{z}}_{j+\ell-2},\check{{z}}_{j+\ell-1}]}\int_{[\check{{z}}_{j-1},\check{{z}}_{j+\ell-2}]}{\check{U}(r,t)\int_{[\check{{z}}_{j+\ell-1},\check{{z}}_{j+\ell}]}{1}\,d{s}}\,d{r}\,d{t}\\
&\quad\quad+\frac{2}{h_{j+\ell-1}}\int_{[\check{{z}}_{j+\ell-2},\check{{z}}_{j+\ell-1}]}\int_{[\check{{z}}_{j+\ell-1},\check{{z}}_{j+\ell}]}{\check{U}(t,s)\int_{[\check{{z}}_{j-1},\check{{z}}_{j+\ell-2}]}{1}\,d{r}}\,d{s}\,d{t}\\
&\quad\leq \frac{h_{j+\ell}}{h_{j+\ell-1}}|{u\circ\gamma}|_{H^{1/2}([\check{{z}}_{j-1},\check{{z}}_{j+\ell-1}])}^2+\frac{\check{{z}}_{j+\ell-2}-\check{{z}}_{j-1}}{h_{j+\ell-1}}|{u\circ\gamma}|_{H^{1/2}(\check{T}_{j+\ell-1}\cup \check{T}_{j+\ell})}^2.
\end{align*}}There holds
\begin{align*}\frac{\check{{z}}_{j+\ell-2}-\check{{z}}_{j-1}}{h_{j+\ell-1}}=\sum_{k=j}^{j+\ell-2}\frac{h_k}{h_{j+\ell-1}}\leq \sum_{k=j}^{j+\ell-2}\kappa(\mathcal{T}_h)^{j+\ell-1-k}=\sum_{k=1}^{\ell-1}\kappa(\mathcal{T}_h)^{k}.
\end{align*}
This implies
{\begin{align*}
&\int_{[\check{{z}}_{j+\ell-1},\check{{z}}_{j+\ell}]}\int_{[\check{{z}}_{j-1},\check{{z}}_{j+\ell-1}]}{\check{U}(r,s)}\,d{r}\,d{s}\\&\quad\leq\kappa(\mathcal{T}_h) |{u\circ\gamma}|_{H^{1/2}([\check{{z}}_{j-1},\check{{z}}_{j+\ell-1}])}^2+|{u\circ\gamma}|_{H^{1/2}(\check{T}_{j+\ell-1}\cup \check{T}_{j+\ell})}^2\sum_{k=1}^{\ell-1}\kappa(\mathcal{T}_h)^{k}.
\end{align*}}
Inserting this into \eqref{eq:ind step Hseminorm[1/2] leq} and using 
\begin{equation*}
1+2\sum_{k=1}^{\ell-1}\kappa(\mathcal{T}_h)^{k}\leq(1+2\kappa(\mathcal{T}_h))^{\ell-1}
\end{equation*}
as well as  the induction hypothesis  \eqref{eq:Hsnorm le Hsnorm local hyp}, we obtain
{\begin{align*}
&|{u\circ\gamma}|_{H^{1/2}([\check{{z}}_{j-1},\check{{z}}_{j+\ell}])}^2\\
&\quad \leq (1+2\kappa(\mathcal{T}_h))|u\circ\gamma|_{H^{1/2}([\check{{z}}_{j-1},\check{{z}}_{j+\ell-1}])}^2 +  (1+2\kappa(\mathcal{T}_h))^{\ell-1}|{u\circ\gamma}|_{H^{1/2}(\check{T}_{j+\ell-1}\cup \check{T}_{j+\ell})}^2 \\
&\quad\leq (1+2\kappa(\mathcal{T}_h))^{\ell-1}\sum_{k=j}^{j+\ell-2}|{u\circ\gamma}|_{H^{1/2}(\check{T}_k\cup \check{T}_{k+1})}^2  +  (1+2\kappa(\mathcal{T}_h))^{\ell-1}|{u\circ\gamma}|_{H^{1/2}(\check{T}_{j+\ell-1}\cup \check{T}_{j+\ell})}^2\\
&\quad=(1+2\kappa(\mathcal{T}_h))^{\ell-1}\sum_{k=j}^{j+\ell-1}|{u\circ\gamma}|_{H^{1/2}(\check{T}_k\cup \check{T}_{k+1})}^2.
\end{align*}}This concludes the induction step and thus proves \eqref{eq:Hsnorm le Hsnorm local ass}. 
 There is a $j\in\Z$ with 
  \begin{equation*}
 \gamma([\check{{z}}_{j-1},\check{{z}}_{\min\{j+2m\,,\,j-1+n\}}])=\omega_{h}^{m}(T).
 \end{equation*} 
Because of Assumption {\rm (A1)}, one can choose $I$ such that $I \subseteq  [\check{{z}}_{j-1}, \check{{z}}_{\min\{j+2m\,,\,j-1+n\}} ]$. We use \eqref{eq:Lnorm patch leq Hseminorm[1/2] I} and \eqref{eq:Hsnorm le Hsnorm local ass}  for $\ell=\min\{2m\,,\,n-1\}$  to see
\begin{align*}
&\norm{u}{L^2(\mathrm{supp}(\psi_T))}^2 \leq  \frac{| \mathrm{supp}(\psi_T)|}{2q}(1+2\kappa(\mathcal{T}_h))^{\min\{2m\,,\,n-1\}-1}\sum_{k=j}^{\min\{j+2m\,,\,j-1+n\}-1}|{u\circ\gamma}|_{H^{1/2}(\check{T}_k\cup \check{T}_{k+1})}^2\\
&\quad\leq \frac{|\mathrm{supp}(\psi_T)|}{2q}(1+2\kappa(\mathcal{T}_h))^{2m-1}\sum_{k=j}^{\min\{j+2m\,,\,j-1+n\}-1}|{u\circ\gamma}|_{H^{1/2}(\check{T}_k\cup \check{T}_{k+1})}^2.
\end{align*}
Finally, we use \eqref{eq:equivalent Hsnorm} and
\begin{equation*}
\Big\{{z}_k:k=j,\dots,\min\{j+2m\,,\,j-1+n\}-1\Big\}\subseteq \omega_h^{m-1}(T)\cap \mathcal{N}_h,
\end{equation*}
to get
\begin{align*}
 \sum_{k=j}^{\min\{j+2m\,,\,j-1+n\}-1}|{u\circ\gamma}|_{H^{1/2}(\check{T}_k\cup \check{T}_{k+1})}^2 &\leq \Cgamma^2 \sum_{k=j}^{\min\{j+2m\,,\,j-1+n\}-1}|{u}|_{H^{1/2}(\omega_h({z}_k))}^2\\
&\leq \Cgamma^{2}\sum_{{z} \in \omega_h^{m-1}(T)\cap \mathcal{N}_h}|{u}|_{H^{1/2}(\omega_h({z}))}^2,
\end{align*}
which concludes the proof.
\end{proof}
\begin{proof}[Proof of Lemma \ref{lem:Lnorm le h Hsnorm} for open $\Gamma_{}\subsetneqq \partial\Omega$]
The proof works essentially as before, where  \eqref{eq:Hsnorm le Hsnorm local ass} now becomes
{\begin{align*}
&\forall j\in\N \left( j+\ell\leq n \Longrightarrow  |{u\circ\gamma}|_{H^{1/2}([\check{{z}}_{j-1},\check{{z}}_{j+\ell}])}^2\leq (1+2\kappa(\mathcal{T}_h))^{\ell-1}\sum_{k=j}^{j+\ell-1}|{u\circ\gamma}|_{H^{1/2}(\check{T}_{k}\cup\check{T}_{k+1})}^2\right).
\end{align*}}Details are found in \cite[Lemma 2.15]{diplarbeit}.
\end{proof}

\begin{proposition}\label{prop:reliability}
Suppose the assumptions~{\rm(A1)--(A2)} and
let $u\in H^{1/2}(\Gamma_{})$ satisfy \eqref{eq:orthogonal}.
Then, there exists a constant $C_3>0$ which depends only on $\Gamma_{}$, $m$, 
$\kappa(\TT_h)$, and $q$ such that
\begin{align}\label{eq:reliability}
\norm{u}{H^{1/2}(\Gamma_{})}^2
\le C_3\,\sum_{{z}\in\NN_h}|u|_{H^{1/2}(\omega_h({z}))}^2.
\end{align}
\end{proposition}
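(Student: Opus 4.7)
The plan is to combine Lemma~\ref{lem:Hnorm le Faer plus} with Lemma~\ref{lem:Lnorm le h Hsnorm} by bounding the remaining $\sum_{T\in\TT_h} h_T^{-1}\|u\|_{L^2(T)}^2$ term by the Faermann-type patch seminorms. Starting from Lemma~\ref{lem:Hnorm le Faer plus}, it only remains to absorb this sum into $\sum_{z\in\NN_h}|u|_{H^{1/2}(\omega_h(z))}^2$ up to a multiplicative constant depending only on $\Gamma$, $m$, $\kappa(\TT_h)$, and~$q$.

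For the case $m=0$, Lemma~\ref{lem:Lnorm le h Hsnorm} directly yields $h_T^{-1}\|u\|_{L^2(T)}^2 \le C_2 |u|_{H^{1/2}(T)}^2$. Since every $T\in\TT_h$ is contained in $\omega_h(z)$ for each of its endpoints $z\in\NN_h$, the monotonicity of the seminorm gives $|u|_{H^{1/2}(T)}^2 \le |u|_{H^{1/2}(\omega_h(z))}^2$, and summing over $T$ each node is counted at most twice (once per adjacent element). Together with Lemma~\ref{lem:Hnorm le Faer plus} this immediately gives \eqref{eq:reliability} with a constant of order $1+C_1 C_2$.

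For the case $m>0$, the plan is: since $T\subseteq \supp(\psi_T)$, we have $\|u\|_{L^2(T)}^2 \le \|u\|_{L^2(\supp(\psi_T))}^2$; combining with Lemma~\ref{lem:Lnorm le h Hsnorm} yields
\begin{align*}
h_T^{-1}\|u\|_{L^2(T)}^2 \le C_2\,\frac{|\supp(\psi_T)|}{h_T}\sum_{z\in\omega_h^{m-1}(T)\cap\NN_h}|u|_{H^{1/2}(\omega_h(z))}^2.
\end{align*}
Assumption (A1) gives $\supp(\psi_T)\subseteq\omega_h^m(T)$, which by iterated shape regularity consists of at most $2m+1$ elements whose arclengths are comparable to $h_T$ with constants depending only on $\kappa(\TT_h)$ and $m$. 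Hence $|\supp(\psi_T)|/h_T$ is bounded by such a constant. After summing over $T\in\TT_h$, I would swap the order of summation: each node $z\in\NN_h$ appears in the inner sum for those $T$ with $z\in\omega_h^{m-1}(T)$, i.e., for $T\subseteq\omega_h^{m-1}(z)$, and the number of such $T$ is again bounded in terms of $m$ and $\kappa(\TT_h)$. This produces $\sum_{T}h_T^{-1}\|u\|_{L^2(T)}^2 \lesssim \sum_{z\in\NN_h}|u|_{H^{1/2}(\omega_h(z))}^2$, which combined with Lemma~\ref{lem:Hnorm le Faer plus} yields \eqref{eq:reliability}.

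The main obstacle I expect is the combinatorial bookkeeping for the case $m>0$: one has to verify carefully that both the ratio $|\supp(\psi_T)|/h_T$ and the overlap multiplicity $\#\{T\in\TT_h:z\in\omega_h^{m-1}(T)\}$ are bounded only in terms of $m$ and $\kappa(\TT_h)$, which ultimately rests on a straightforward induction $h_{T'}\le\kappa(\TT_h)^{k}h_T$ for $T'$ reachable from $T$ in $k$ patch-steps. Everything else is a direct consequence of the already established lemmas.
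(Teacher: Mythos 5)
Your proposal is correct and follows essentially the same route as the paper's proof: reduce via Lemma~\ref{lem:Hnorm le Faer plus}, treat $m=0$ directly with Lemma~\ref{lem:Lnorm le h Hsnorm}, and for $m>0$ bound $|\supp(\psi_T)|/h_T$ (the paper uses $|\omega_h^m(T)|/h_T$) by iterated shape regularity and then swap the order of summation, where each node $z$ lies in $\omega_h^{m-1}(T)$ for at most $2m$ elements $T$. The combinatorial details you flag as the remaining work are exactly the short computations carried out in the paper, so no gap remains.
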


\begin{proof}[Proof of Proposition \ref{prop:reliability} for closed $\Gamma_{}=\partial\Omega$]
{Without loss of generality, we may assume that $\gamma=\gamma_L$.} Due to Lemma \ref{lem:Hnorm le Faer plus}, it remains to estimate the term $\sum_{T\in\mathcal{T}_h} h_T^{-1}\norm{u}{L^2(T)}^2$.
For $m=0$, we see
\begin{align*}
C_2^{-1}\sum_{T\in \mathcal{T}_h} h_{T}^{-1} \norm{u}{L^2(T)}^2&\leq \sum_{T\in\mathcal{T}_h} |{u}|_{H^{1/2}(T)}^2\leq \sum_{{z} \in \mathcal{N}_h}|{u}|_{H^{1/2}(\omega_h({z}))}^2.
\end{align*}
For $m>0$, Assumption {\rm (A1)} and Lemma \ref{lem:Lnorm le h Hsnorm} give 
\begin{equation}\label{eq:Lnorm le h semilocal Hsnorm local}
 \norm{u}{L^2(T)}^2\leq\norm{u}{L^2(\mathrm{supp}(\psi_T))}^2\leq C_2 | \omega_h^{m}( T)|\sum_{{z} \in \omega_h^{m-1}( T)\cap \mathcal{N}_h}|{u}|_{H^{1/2}(\omega_h({z}))}^2.
\end{equation}
Let $j\in \{1,\dots,n\}$ with $T=T_j$.
We extend the mesh data periodically.
With {the abbreviate notation} $h_{\ell}:=h_{\check{T}_{\ell}}$, we see
\begin{equation}\label{eq:patchm/hm}
\frac{|\omega_h^{m}(T)|}{h_{T}}\leq\frac{\check{{z}}_{j+m}-\check{{z}}_{j-1-m}}{h_{j}} =\sum_{\ell=-m+1}^{m+1}\frac{h_{j-1+\ell}}{h_{j}}\leq \sum_{\ell=-m+1}^{m+1}\kappa(\mathcal{T}_h)^{|\ell-1|}
\end{equation}
Combining \eqref{eq:Lnorm le h semilocal Hsnorm local} and \eqref{eq:patchm/hm},  we obtain with $C_3:=C_2\sum_{\ell=-m+1}^{m+1}\kappa(\mathcal{T}_h)^{|\ell-1|}$
\begin{align}
\sum_{T\in \mathcal{T}_h} h_{T}^{-1} \norm{u}{L^2(T)}^2&\leq  C_3\sum_{T\in\mathcal{T}_h}\sum_{{z} \in \omega_h^{m-1}( T)\cap \mathcal{N}_h}|{u}|_{H^{1/2}(\omega_h({z}))}^2
=  C_3 \sum_{T\in\mathcal{T}_h}\sum_{\substack{{z} \in \mathcal{N}_h \\ {z} \in \omega_h^{m-1}(T)}}|{u}|_{H^{1/2}(\omega_h({z}))}^2\notag\\
&=C_3 \sum_{{z} \in \mathcal{N}_h}\sum_{\substack{T\in\mathcal{T}_h\\ {z} \in \omega_h^{m-1}(T)}}|{u}|_{H^{1/2}(\omega_h({z}))}^2
\label{eq:last step prop2 sec1} = 2C_3 m\sum_{{z} \in \mathcal{N}_h}|{u}|_{H^{1/2}(\omega_h({z}))}^2.
\end{align}
This concludes the proof.
\end{proof}
\begin{proof}[Proof of Proposition \ref{prop:reliability} for open $\Gamma_{}\subsetneqq\partial\Omega$]
The proof works essentially as for $\Gamma=\partial\Omega$. For details we refer to \cite[Proposition 2.16]{diplarbeit}.
\end{proof}

\begin{proof}[Proof of Theorem~\ref{thm:faermann}, eq.~\eqref{eq2:thm:faermann}]
Galerkin BEM ensures the Galerkin orthogonality
\begin{align*}
 \int_{\Gamma_{}} r_h(x)u_h(x)\,dx= \int_{\Gamma_{}} \big(V(\phi-\phi_h)\big)(x)u_h(x)\,dx = 0
 \quad\text{for all }u_h\in\XX_h
\end{align*}
and hence guarantees~\eqref{eq:orthogonal} for the residual $r_h=f-V\phi_h = V(\phi-\phi_h)$.
Since $V$ is an isomorphism, $\norm{r_h}{H^{1/2}(\Gamma)}\simeq \norm{\phi-\phi_h}{\H^{-1/2}(\Gamma)}$
together with~\eqref{eq:reliability} proves~\eqref{eq2:thm:faermann}.
\end{proof}

\section{Adaptive IGABEM}
\label{section:abem}


\subsection{B-splines and NURBS}
Throughout this subsection, we consider \textit{knots} $\check{\mathcal{K}}:=(t_i)_{i\in\Z}$ on $\R$ with $t_{i-1}\leq t_{i}$ for $i\in \Z$ and $\lim_{i\to \pm\infty}t_i=\pm \infty$.
For the multiplicity of any knot $t_i$, we write $\#t_i$.
We denote the corresponding set of \textit{nodes} $\check{\mathcal{N}}:=\set{t_i}{i\in\Z}=\set{\check{{z}}_{j}}{j\in \Z}$ with $\check{{z}}_{j-1}<\check{{z}}_{j}$ for $j\in\Z$. 
For $i\in\Z$, the $i$-th \textit{B-Spline} of degree $p$ is defined inductively by
\begin{align}
\begin{split}
B_{i,0}&:=\chi_{[t_{i-1},t_{i})},\\
B_{i,p}&:=\beta_{i-1,p} B_{i,p-1}+(1-\beta_{i,p}) B_{i+1,p-1} \quad \text{for } p\in \N,
\end{split}
\end{align}
where, for $t\in \R$,
\begin{align*}
\beta_{i,p}(t):=
\begin{cases}
\frac{t-t_i}{t_{i+p}-t_i} \quad &\text{if } t_i\neq t_{i+p},\\
0 \quad &\text{if } t_i= t_{i+p}.
\end{cases}
\end{align*}
 We also use the notations $B_{i,p}^{\check{\mathcal{K}}}:=B_{i,p}$ and $\beta_{i,p}^{\check{\mathcal{K}}}:=\beta_{i,p}$ to stress the dependence on the knots~$\check{\mathcal{K}}$.
 The proof of the following theorem is found in \cite[Theorem 6]{Boor-SplineBasics}.
\begin{theorem}\label{thm:spline basis}
Let $I=[a,b)$ be a finite interval and $p\in \N_0$.
Then
\begin{equation}
\set{B_{i,p}|_I}{i\in \Z, B_{i,p}|_I\neq 0}
\end{equation}
is a basis for the space of all right-continuous $\check{\mathcal{N}}-$piecewise polynomials of degree lower or equal $p$ on $I$ and which are, at each knot $t_i$, $p-\#t_i$ times continuously differentiable if $p-\#t_i\geq 0$.
\end{theorem}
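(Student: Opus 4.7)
The plan is the classical three-step strategy: show that each nontrivial $B_{i,p}|_I$ lies in the target space, that these B-splines are linearly independent on $I$, and that they span it by matching dimensions.

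For the membership statement, I would induct on the degree $p$. The base case $p=0$ is immediate, since $B_{i,0}=\chi_{[t_{i-1},t_i)}$ is a right-continuous piecewise constant. For the inductive step, the recursion expresses $B_{i,p}$ as a sum of products of a continuous piecewise linear function and a B-spline of degree $p-1$, so that the support is contained in $[t_{i-1},t_{i+p})$, the result is $\check{\mathcal{N}}$-piecewise polynomial of degree at most $p$, and right-continuity is preserved from the base case. For the smoothness at an interior knot $t_j$ with $s:=\#t_j\leq p$, I would verify inductively that each recursion step raises the differentiability class by one at $t_j$: by induction $B_{i,p-1}$ and $B_{i+1,p-1}$ lie in $C^{p-1-s}$ at $t_j$, while $\beta_{i-1,p}$ and $1-\beta_{i,p}$ are $C^\infty$ near $t_j$ because the relevant denominators $t_{i+p-1}-t_{i-1}$ and $t_{i+p}-t_i$ are strictly positive whenever $s\leq p$, so the product lies in $C^{p-s}$ there.

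For linear independence I would exploit the local support: on each nonempty subinterval $(\check{z}_{j-1},\check{z}_j)$ at most $p+1$ of the $B_{i,p}$ are nonzero, and their polynomial restrictions form a basis of the degree $\le p$ polynomials on that subinterval. This can be read off the Marsden identity or verified directly by noting that, on any single polynomial piece, the B-splines with indices $j-p,\ldots,j$ form a triangular system with nonzero diagonal entries. A vanishing combination $\sum_i c_i B_{i,p}|_I = 0$ then forces $c_i=0$ when swept across subintervals from left to right. For spanning, I would compute the dimension of the target space by the standard accounting: it equals $(p+1)$ times the number of subintervals of $I$ minus the total number of continuity conditions $\sum_j \max(p+1-\#t_j,0)$ summed over interior nodes, with analogous corrections at $a$ and $b$. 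A short telescoping computation shows this number equals the cardinality of $\set{i\in\Z}{B_{i,p}|_I\neq 0}$, completing the argument.

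The main obstacle is the bookkeeping at knots of multiplicity greater than $p$, where the target space locally decouples into independent polynomial spaces on either side, and at nodes coinciding with the endpoints $a$ and $b$; in both situations one must check that the count of nontrivial B-splines matches the corrected dimension. Tracking smoothness in the inductive step also requires care whenever intermediate $\beta$-denominators could vanish, which happens precisely at knots whose multiplicity reaches $p+1$ inside the support in question. Both issues are routine once the setup is pinned down but are where a direct proof most easily slips; an alternative that avoids the dimension count entirely is to construct the de Boor-Fix dual functionals, which produce a biorthogonal family and hence prove the basis property abstractly.
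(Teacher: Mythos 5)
First, note that the paper does not prove this statement itself: it is quoted verbatim from de~Boor's \emph{Spline Basics} (Theorem~6 there, the Curry--Schoenberg theorem), so your proposal has to be measured against that standard argument. Your overall plan --- membership in the target space, local linear independence, and a dimension count --- is exactly the classical route and is sound in outline, including your remarks about the bookkeeping at knots of multiplicity $>p$ and at the endpoints $a,b$.

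However, the smoothness part of your membership step contains a genuine error. You argue that, at an interior knot $t_j$ with $s=\#t_j\le p$, the factors $\beta_{i-1,p}$ and $1-\beta_{i,p}$ are smooth and $B_{i,p-1},B_{i+1,p-1}$ are $C^{p-1-s}$ by induction, ``so the product lies in $C^{p-s}$''. Multiplying a $C^{p-1-s}$ function by a smooth function does not raise its differentiability class: each individual product in the recursion is in general only $C^{p-1-s}$ at $t_j$ (indeed discontinuous when $s=p$), and the extra order of smoothness of $B_{i,p}$ comes from cancellation between the two summands. The simplest instance is $p=1$ with simple knots: with the paper's indexing, $\beta_{i-1,1}B_{i,0}$ and $(1-\beta_{i,1})B_{i+1,0}$ each jump at $t_i$, and only their sum (the hat function) is continuous there. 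Moreover, for $s=p$ your inductive hypothesis ``$B_{i,p-1}\in C^{-1}$'' carries no information, yet you still must produce continuity of $B_{i,p}$. The standard repair is to first prove the differentiation formula expressing $B_{i,p}'$ as a weighted difference of $B_{i,p-1}$ and $B_{i+1,p-1}$ (or to use the representation of $B_{i,p}$ as a divided difference of truncated powers) and read off the smoothness from that; note also that your fallback via de~Boor--Fix dual functionals does not close this gap, since biorthogonality yields independence and spanning but not the containment of the B-splines in the stated smoothness class. The remaining steps (local linear independence, e.g.\ via Marsden's identity, and the telescoping dimension count) are standard and acceptable as sketched.
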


In addition to the knots $\check{\mathcal{K}}=(t_i)_{i\in\Z}$, we consider positive weights $\mathcal{W}:=(w_i)_{i\in\Z}$ with $w_i>0$.
For $i\in \Z$ and $p\in \N_0$, we define the $i$-th  \textit{non-uniform rational B-Spline} of degree $p$ or shortly \textit{NURBS} as
\begin{equation}
R_{i,p}:=\frac{w_iB_{i,p}}{\sum_{\ell\in\Z}  w_{\ell}B_{\ell,p}}.
\end{equation}
 We also use the notation $R_{i,p}^{\check{\mathcal{K}},\mathcal{W}}:=R_{i,p}$.
Note that the denominator is locally finite and never zero as shown in the following lemma.

\begin{lemma}\label{lem:properties for NURBS}
 For $p\in \N_0$ and $i,\ell \in\Z$, the following assertions hold: 
\begin{enumerate}[{\rm(i)}]
\item \label{item:NURBS rational} $R_{i,p}|_{[t_{\ell-1},t_{\ell})}$ is a rational function with nonzero denominator, which can be extended continuously at $t_{\ell}$.
\item \label{item:NURBS local} $R_{i,p}$ vanishes outside the interval $[t_{i-1},t_{i+p})$. 
It is positive on the open interval $(t_{i-1},t_{i+p})$.
\item  It holds $t_{i-1}=t_{i+p}$ if and only if $R_{i,p}=0$.
\item \label{item:NURBS determined} $B_{i,p}$ is completely determined by the $p+2$ knots $t_{i-1},\dots,t_{i+p}$.
$R_{i,p}$ is completely determined by the $3p+2$ knots $t_{i-p-1},\dots,t_{i+2p}$ and   the $2p+1$ weights $w_{i-p},\dots,w_{i+p}$.
Therefore, we will also use the notation
\begin{equation}\label{eq:Rip}
  R(\cdot|t_{i-p-1},\dots,t_{i+2p},w_{i-p},\dots,w_{i+p}):=R_{i,p}.
\end{equation}
\item\label{item:NURBS partition} The NURBS functions of degree $p$ form a partition of unity, i.e.
\begin{equation}
\sum_{i \in\Z} R_{i,p}=1\quad \text{on }\R.
\end{equation}
\item If all weights are equal, then $R_{i,p}=B_{i,p}$.
Hence, B-splines are just special NURBS functions. 
\item \label{item:B is R} Each NURBS function $R_{i,p}$ is at least $p-\#t_{\ell}$ times continuously differentiable at $t_{\ell}$ if $p-\#t_{\ell}\geq 0$.
\item \label{item:NURBS translated}  For $s,t\in\R$ and $c>0$, we have
\begin{equation}
\forall t\in \R:\quad R_{i,p}^{s+\check{\mathcal{K}},\mathcal{W}}(t)=R_{i,p}^{\check{\mathcal{K}},\mathcal{W}}(t-s)
\end{equation}
as well as
\begin{equation}
\forall t\in \R:\quad R_{i,p}^{c\check{\mathcal{K}},\mathcal{W}}(t)=R_{i,p}^{\check{\mathcal{K}},\mathcal{W}}(t/c).
\end{equation}
 \item \label{item:NURBS convergence} Let $\check{\mathcal{K}}_{\ell}=(t_{i,\ell})_{i\in\Z}$ be a sequence of knots such that $\#t_{i,\ell}=\#t_i$ for all $i\in\Z$ and, $\mathcal{W}_{\ell}=(w_{i,\ell})_{i\in\Z}$ a sequence of positive weights.
If $(\check{\mathcal{K}}_{\ell})_{\ell\in\N}$ converges pointwise to $\check{\mathcal{K}}$ and $(\mathcal{W}_{\ell})_{\ell\in\N}$ converges pointwise to $\mathcal{W}$, then $\big(R_{i,p}^{\check{\mathcal{K}}_{\ell},\mathcal{W}_{\ell}}\big)_{\ell\in\N}$ converges almost everywhere to $R_{i,p}^{\check{\mathcal{K}},\mathcal{W}}$ for all $i\in \N$.

\end{enumerate}
\end{lemma}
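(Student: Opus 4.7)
The plan is to work through the nine items in an order that reflects their logical dependence, establishing the analogous B-spline facts first (many by induction on $p$ using the recursion $B_{i,p} = \beta_{i-1,p} B_{i,p-1} + (1-\beta_{i,p}) B_{i+1,p-1}$) and then transferring them to NURBS via the definition $R_{i,p} = w_i B_{i,p}/\sum_\ell w_\ell B_{\ell,p}$. Items (ii), (iii), and (viii) for B-splines follow directly from an induction on $p$: the support property $\supp B_{i,p} \subseteq [t_{i-1}, t_{i+p})$ with positivity on the interior comes from the fact that the coefficients $\beta_{i-1,p}$ and $1-\beta_{i,p}$ are nonnegative on the respective supports, and that both child B-splines cannot vanish simultaneously on $(t_{i-1}, t_{i+p})$ unless the whole interval collapses; for the translation/scaling law one checks that the $\beta$-weights are invariant under $t \mapsto t-s$ (resp.\ $t \mapsto t/c$) of both $t$ and the knots. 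Item (vi) is immediate from the definition, and item (iv) for $B_{i,p}$ is read off the recursion; the NURBS version of (iv) then follows because by (ii) the only $B_{\ell,p}$ that affect $R_{i,p}$ on $[t_{i-1}, t_{i+p})$ are those with $i-p \le \ell \le i+p$, which together involve precisely the knots $t_{i-p-1}, \ldots, t_{i+2p}$ and weights $w_{i-p}, \ldots, w_{i+p}$.

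For the partition of unity (v), I would first prove $\sum_i B_{i,p} \equiv 1$ on $\R$ by induction on $p$: shifting the index of summation and using $\beta_{i-1,p} + (1 - \beta_{i-1,p}) = 1$ reduces the sum at level $p$ to the sum at level $p-1$. Since the NURBS are obtained from B-splines by normalizing with the weighted sum in the denominator, $\sum_i R_{i,p} = 1$ is then automatic. For (i), the local finiteness and positivity of $\sum_\ell w_\ell B_{\ell,p}$ on $[t_{\ell-1}, t_\ell)$ follow from (ii) and the fact (from partition of unity) that at least one $B_{\ell,p}$ is strictly positive at each point; since each $B_{\ell,p}$ is piecewise polynomial, the quotient is rational on each knot interval and, because the denominator stays bounded away from zero in a neighborhood of any interior point of $[t_{\ell-1}, t_\ell)$ and is right-continuous at $t_\ell$, it extends continuously. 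Item (vii) then follows because $B_{i,p}$ possesses the stated smoothness at $t_\ell$ by Theorem~\ref{thm:spline basis}, the denominator inherits the minimal smoothness among its summands and remains strictly positive, and the quotient rule preserves $C^{p-\#t_\ell}$-regularity.

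The main obstacle is item (ix), the pointwise/a.e.\ convergence of $R_{i,p}^{\check{\mathcal{K}}_\ell,\mathcal{W}_\ell}$ to $R_{i,p}^{\check{\mathcal{K}},\mathcal{W}}$ under pointwise convergence of knots and weights. The delicacy is that the recursion coefficients $\beta_{i,p}(t) = (t-t_i)/(t_{i+p}-t_i)$ are discontinuous in the knots precisely when $t_i = t_{i+p}$, which is exactly the case excluded by the convention $\beta_{i,p} \equiv 0$. The hypothesis $\#t_{i,\ell} = \#t_i$ for all $i$ is what prevents knot collisions from appearing or disappearing in the limit, so that for each fixed $i$ either $t_{i+p,\ell} > t_{i,\ell}$ for all $\ell$ large (and the formula converges pointwise by continuity of the quotient) or $t_{i+p,\ell} = t_{i,\ell}$ for all $\ell$ large (and $\beta_{i,p,\ell} \equiv 0$ identically). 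I would therefore induct on $p$: for $p=0$ the indicator functions $\chi_{[t_{i-1,\ell}, t_{i,\ell})}$ converge almost everywhere to $\chi_{[t_{i-1}, t_i)}$, since the discrepancy is confined to the (null) endpoint set; for the step, I combine the inductive hypothesis with pointwise convergence of $\beta_{i,p,\ell}$ off of a null set. Finally, to lift the convergence from $B$ to $R$, one uses that the denominator $\sum_\ell w_{\ell,\ell'} B_{\ell,p}^{\check{\mathcal{K}}_{\ell'},\mathcal{W}_{\ell'}}$ is locally a finite sum by (ii) (uniformly in $\ell'$ thanks to the multiplicity hypothesis) and that the limit denominator is strictly positive on any compact subset of the interior support, so the reciprocal is pointwise continuous in the limit.

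Items (viii) for NURBS and the remaining pieces of (ii)--(iii) for NURBS are then direct consequences of the B-spline versions and the definition of $R_{i,p}$, completing the proof.
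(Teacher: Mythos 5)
Your proposal is correct and proceeds along essentially the same lines as the paper: the paper simply cites de~Boor's \emph{Spline Basics} for the B-spline versions of (i)--(v) (which you reprove by the standard induction on the recursion), notes that the passage to NURBS via the quotient with the positive, locally finite denominator is trivial, and handles (viii) and (ix) exactly as you do, via invariance of the indicator functions and the $\beta$-coefficients under translation/scaling, respectively by induction using a.e.\ convergence of $B_{i,0}$ and pointwise convergence of the $\beta$'s (the multiplicity hypothesis preventing the $0/0$ convention from switching on or off in the limit), followed by passing to the NURBS quotient. No gaps; your treatment of the locally finite denominator and its positivity when lifting (ix) to NURBS is the only point the paper leaves implicit, and you handle it correctly.
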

\begin{proof}
The proof for (i)--(v) can be found in \cite[Section 2, page 9--10]{Boor-SplineBasics} for B-splines.
The generalization to NURBS is trivial.
(vi) is an immediate consequence of (v).
(vii)~follows from Theorem \ref{thm:spline basis}.
To prove  (viii), we note that
for all $\ell\in\Z$ and $t\in\R$ it holds
\begin{align*}
\chi_{[s+t_{\ell-1},s+t_{\ell})}(t)=\chi_{[t_{\ell-1},t_{\ell})}(t-s)\quad\text{and}\quad\chi_{[ct_{\ell-1},ct_{\ell}+s)}(t)=\chi_{[t_{\ell-1},t_{\ell})}(t/c)
\end{align*}
as well as
\begin{align*}
\frac{t-(s+t_{\ell})}{(s+t_{\ell+p})-(s+t_{\ell})}=\frac{(t-s)-t_{\ell}}{t_{\ell+p}-t_{\ell}}\quad\text{and}\quad\frac{t-ct_{\ell}}{ct_{\ell+p}-ct_{\ell}}=\frac{t/c-t_{\ell}}{t_{\ell+p}-t_{\ell}}.
\end{align*}
Hence, the assertion is an immediate consequence of the definition of B-splines.
For B-splines, (ix) is  proved by induction, noting that for all $p'\in\N$ and $i\in \Z$, we have
\begin{equation*}
\beta_{i,p'}^{\check{\mathcal{K}}_{\ell}}\stackrel{a.e.}{\longrightarrow} \beta_{i,p'}^{\check{\mathcal{K}}}\qquad\text{and}\qquad B_{i,0}^{\check{\mathcal{K}}_{\ell}}\stackrel{a.e.}{\longrightarrow}B_{i,0}^{\check{\mathcal{K}}}.
\end{equation*}
This easily implies the convergence of $R_{i,p}^{\check{\KK}_\ell}$.
\end{proof}

For any $p\in\N_0$, we define the vector spaces
\begin{equation}
\mathscr{S}^p(\check{\mathcal{K}}):=\left\{\sum_{i\in\Z}a_i B_{i,p}:a_i\in\R\right\}
\end{equation}
as well as
\begin{equation}\label{eq:NURBS space defined} 
\mathscr{N}^p(\check{\mathcal{K}},\mathcal{W}):=\left\{\sum_{i\in\Z}a_i R_{i,p}:a_i\in\R\right\}=\frac{\mathscr{S}^p(\check{\mathcal{K}})}{\sum_{i\in \Z} w_i B_{i,p}^{\check{\mathcal{K}}}}.
\end{equation}
 Note that the sums are locally finite.

An analogous version of the following result is already found in \cite{faermann2d} for the special case of B-splines of degrees $p=0,1,2$ and knot multiplicity $\#t_i=1$ for all $i\in \Z$ and weight function $\varphi=1$. 
The following generalization to arbitrary NURBS, however, requires a completely new idea.

\begin{lemma}\label{lem:NURBS satisfy Assumption}
Let $I$ be a compact interval with nonempty interior, $\kappa_{\max}\geq 1$, $0<w_{\min}\leq w_{\max}$ real numbers, $p\in \N_0$, and $\varphi:I\to \R^+$ a piecewise continuously differentiable function   with positive infimum.
Then there exists a constant 
\begin{equation*}
q=q\big(\kappa_{\max},w_{\min},w_{\max},p,\varphi \big)\in (0,1]
\end{equation*} 
such that for arbitrary knots $t_0\leq\dots \leq t_{3p+1}\in I$  and corresponding nodes $\check{z}_0,\dots,\check{z}_m$ with
\begin{align}\label{eq:kappa-knots}
\kappa(t_0,\dots,t_{3p+1}):=\max\left\{\frac{\check{z}_{j+1}-\check{z}_{j}}{\check{z}_{j}-\check{z}_{j-1}},\frac{\check{z}_{j}-\check{z}_{j-1}}{\check{z}_{j+1}-\check{z}_{j}}:j=1,\dots,m-1\right\}\leq \kappa_{\max},
\end{align}
weights   $w_{\min}\leq w_1,\dots,w_{2p+1}\leq w_{\max}$  and all $\ell\in\{p+1,\dots,2p+1\}$,
\begin{equation}\label{eq:my inequality}
\big\|\big(1-R(\cdot|t_0,\dots,t_{3p+1},w_1,\dots,w_{2p+1})\big)\cdot\varphi\big\|_{L^1([t_{\ell-1},t_{\ell}])}\leq (1-q) \norm{\varphi}{L^1([t_{\ell-1},t_{\ell}])}.
\end{equation}
  Note that there holds 
\begin{equation*}
\mathrm{supp}\big(R(\cdot|t_0,\dots,t_{3p+1},w_1,\dots,w_{2p+1})\big)=[t_{p},t_{2p+1}].
\end{equation*}
\end{lemma}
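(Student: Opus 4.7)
The plan is to reduce \eqref{eq:my inequality} to a lower bound on the average of the B-spline $B_{p+1,p}$ over $[t_{\ell-1},t_\ell]$, and then to prove that bound by a compactness and continuity argument that exploits the scaling and translation invariance of NURBS. If $t_{\ell-1}=t_\ell$, both sides of \eqref{eq:my inequality} vanish and the claim is trivial, so assume $t_{\ell-1}<t_\ell$. Since $[t_{\ell-1},t_\ell]\subseteq[t_p,t_{2p+1}]$, only $B_{1,p},\dots,B_{2p+1,p}$ are non-zero there and form a partition of unity, giving
\[
R(t)=\frac{w_{p+1}B_{p+1,p}(t)}{\sum_{k=1}^{2p+1}w_k B_{k,p}(t)}\ge\frac{w_{\min}}{w_{\max}}\,B_{p+1,p}(t).
\]
Setting $\varphi_{\min}:=\inf_I\varphi>0$ and $\varphi_{\max}:=\sup_I\varphi<\infty$, the claim \eqref{eq:my inequality} is equivalent to $\int R\varphi\ge q\int\varphi$, and it suffices to show
\[
c:=\inf\,\frac{1}{t_\ell-t_{\ell-1}}\int_{t_{\ell-1}}^{t_\ell}B_{p+1,p}(t)\,dt>0,
\]
where the infimum ranges over all admissible configurations with $t_{\ell-1}<t_\ell$ and all $\ell\in\{p+1,\dots,2p+1\}$; then $q:=c\,(w_{\min}/w_{\max})(\varphi_{\min}/\varphi_{\max})$ works.

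By Lemma \ref{lem:properties for NURBS}(viii), translating and scaling the knots leaves this average invariant, so we may normalize $t_{\ell-1}=0$ and $t_\ell-t_{\ell-1}=1$. The shape-regularity \eqref{eq:kappa-knots} forces every distinct-node gap into $[\kappa_{\max}^{-(3p+1)},\kappa_{\max}^{3p+1}]$, so after normalization all knots lie in a compact interval $[-C,C]$ with $C$ depending only on $\kappa_{\max}$ and $p$. Since Lemma \ref{lem:properties for NURBS}(ix) presumes the knot multiplicity pattern is preserved along limits, we stratify the parameter space by this pattern: there are only finitely many tuples of positive integers summing to $3p+2$, and within each fixed pattern the admissible normalized knot vectors form a closed, and therefore compact, subset of the box $[-C,C]^{3p+2}$.

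On each such compact stratum, the map $(t_0,\dots,t_{3p+1})\mapsto\int_0^1 B_{p+1,p}(t)\,dt$ is continuous: Lemma \ref{lem:properties for NURBS}(ix) (applied to B-splines as NURBS with unit weights, cf.\ Lemma \ref{lem:properties for NURBS}(vi)) delivers a.e.\ pointwise convergence of $B_{p+1,p}$, and dominated convergence with the bound $0\le B_{p+1,p}\le 1$ (a partition-of-unity consequence) yields convergence of the integral. The map is strictly positive on the whole stratum because $(0,1)\subseteq(t_p,t_{2p+1})$ lies in the interior of $\supp(B_{p+1,p})$, where $B_{p+1,p}>0$ by Lemma \ref{lem:properties for NURBS}(ii). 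Compactness therefore yields a positive pattern-dependent lower bound, and the minimum over the finitely many patterns and finitely many indices $\ell$ produces the desired constant $c>0$.

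The main obstacle is genuinely the discontinuity of NURBS and B-splines under changes of the knot multiplicity pattern, since Lemma \ref{lem:properties for NURBS}(ix) explicitly assumes $\#t_{i,\ell}=\#t_i$; a naive single-stratum compactness argument on the full parameter space would fail precisely on the boundary strata where two knots merge. Stratifying by multiplicity pattern and applying compactness separately on each of the finitely many strata is precisely what sidesteps this issue, and everything else reduces to routine assembly of the partition-of-unity reduction, the scaling invariance, and continuity on each stratum.
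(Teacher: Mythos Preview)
Your proof is correct, and it takes a genuinely simpler route than the paper's.

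The essential difference is that you immediately pass to two crude bounds, $R\ge (w_{\min}/w_{\max})B_{p+1,p}$ and $\varphi_{\min}\le\varphi\le\varphi_{\max}$, which reduce the whole question to a lower bound on the \emph{unweighted} average of the B-spline $B_{p+1,p}$ over $[t_{\ell-1},t_\ell]$. Once $\varphi$ is gone, the average is affine-invariant, so your normalisation $t_{\ell-1}=0$, $t_\ell=1$ combined with the shape-regularity constraint makes each multiplicity-pattern stratum genuinely compact (one node gap is $1$, hence all gaps are bounded below and above), and a single compactness argument per stratum finishes the proof. The paper instead keeps the general $\varphi$ and the NURBS $R$ throughout; it then has to cope with the non-compactness of its parameter set (the strict inequality $\check z_0<\check z_1$ may degenerate) by a delicate limit argument: it first treats $\varphi=1$ via the same normalisation, then two-valued step functions, and finally reduces arbitrary piecewise-$C^1$ weights $\varphi$ to those cases via a first-order expansion at the collapse point.

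What each approach buys: your argument is short and robust, and shows that $q$ can be chosen depending on $\varphi$ only through the ratio $\varphi_{\min}/\varphi_{\max}$, which is a cleaner dependence than the statement suggests. The paper's argument, though longer, tracks the dependence on $\varphi$ more finely (through its local behaviour near potential collapse points) and, in principle, yields a sharper constant; it also never throws away the weights $w_i$, so the weight dependence enters only through the actual NURBS rather than the worst-case ratio $w_{\min}/w_{\max}$. For the purpose of the lemma as stated, your route is entirely adequate and arguably preferable.
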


\begin{proof}
We prove the lemma in five steps.

\noindent {\bf Step 1:} We give an abstract formulation of the problem.
For $1\leq \nu \leq 3p+1$, we define the bounded set
 \begin{align*}
M_{\nu}&:=\Big\{(\check{z}_{0},\dots,\check{z}_{\nu},w_{1},\dots,w_{2p+1})\in I^{\nu}\times [w_{\min},w_{\max}]^{2p+1}:\check{z}_{0}<\check{z}_{1}, \\
&\forall m\in\{2,\dots,\nu\}:
\frac{1}{\kappa_{\max}}\big(\check{z}_{m-1}-\check{z}_{m-2}\big)\leq\check{z}_{m}-\check{z}_{m-1}\leq \kappa_{\max}\big(\check{z}_{m-1}-\check{z}_{m-2}\big)\Big\}.
\end{align*}
Note that $(\check{z},w)\in M_{\nu}$ already implies $\check{z}_0<\dots<\check{z}_{\nu}$.
For a vector of multiplicities $k\in \N^{\nu+1}$ with $\sum_{m=0}^{\nu} k_m=3p+2$ we introduce the function
\begin{equation*}
g_{ k,\nu}:\R^{\nu}\to \R^{3p+2}:(\check{z}_{0},\dots,\check{z}_{\nu})\mapsto (\underbrace{\check{z}_{0},\dots,\check{z}_{0}}_{ k_0-\text{times}},\dots,\underbrace{\check{z}_{\nu},\dots,\check{z}_{\nu}}_{ k_{\nu}-\text{times}}).
\end{equation*}
Moreover, we define for $\ell \in \{p+1,\dots,2p+1\}$ the function 
\begin{align*}
\Phi_{ k,\ell,\nu}&:M_{\nu}\to\R:(\check{z},w)\mapsto \frac{\big\|\big(1-R(\cdot|g_{ k,\nu}(\check{z}),w)\big)\cdot\varphi\big\|_{L^1([g_{ k,\nu}(\check{z})_{\ell-1},g_{ k,\nu}(\check{z})_{\ell}])}}{\norm{\varphi}{L^{1}([g_{ k,\nu}(\check{z})_{\ell-1},g_{ k,\nu}(\check{z})_{\ell}])}},
\end{align*}
where $\frac{0}{0}:=0$.
Our aim is to show that for arbitrary $k,\ell,\nu$ there holds $\sup(\Phi_{k,\ell,\nu}(M_{\nu}))<1$.
Then, we define the constant $(1-q)$  as the maximum of all these suprema.
Note that the maximum is taken over a finite set,  since $\sum_{m=0}^{\nu} k_m=3p+2$, $\ell\in \{p+1,\dots,2p+1\}$ and $1\leq \nu\leq 3p+1$. 
  Before we proceed, we show that $(1-q)$ really has the desired properties. 
Without loss of generality, we can assume that not all considered knots $t_{
0},\dots,t_{3p+1}$ are equal.
The corresponding nodes $\check{z}_0,\dots,\check{z}_{\nu}$ and weights $w_1,\dots,w_{2p+1}$ are in $M_{\nu}$. 
If $k$ is the corresponding multiplicity vector, \eqref{eq:my inequality} can indeed be equivalently written as 
\begin{equation*}
\Phi_{k,\ell,\nu}(\check{z},w)\leq(1-q).
\end{equation*}
{\bf Step 2:} \label{item:Phi continuous and lower 1} We fix $k, \ell,\nu$. 
Without loss of generality, we assume that there exists $0\leq \widetilde{\nu}\leq \nu$ such that $\ell-1=\sum_{m=0}^{\widetilde{\nu}}k_m$. 
This just means that the appearing integrals have nonempty integration domains $[g_{ k,\nu}(\check{z})_{\ell-1},g_{ k,\nu}(\check{z})_{\ell}]$, since in this case $\Phi_{k,\ell,\nu}(\check{z},w)=0$ is already bounded.
Using Lemma \ref{lem:properties for NURBS}, (\ref{item:NURBS local}) and (\ref{item:NURBS partition}), we see  that for $(\check{z},w)\in M_{\nu}$, the function $R(\cdot|g_{ k,\nu}(\check{z}),w)$ attains only values in $[0,1]$ and is positive on the interval $\big(g_{k,\nu}(\check{z})_{\ell-1},g_{k,\nu}(\check{z})_{\ell}\big)$.
This implies 
\begin{equation}\label{eq:ran(Phi)<1}
\Phi_{ k,\ell,\nu}(M_{\nu})\subseteq[0,1).
\end{equation}
Because of Lemma \ref{lem:properties for NURBS}, (\ref{item:NURBS convergence}), we can apply Lebesgue's dominated convergence theorem to see that $\Phi_{ k,\ell,\nu}$ is continuous. {If $M_{\nu}$ was compact, we would be done. Unfortunately it is not.}

\noindent{\bf Step 3:} \label{item:varphi=1} Now, we prove the lemma for $\varphi=1$. 
In the definition of $M_{\nu}$ we replace the interval $I$ by $\R$ to define a superset of $M_{\nu}$
\begin{align*}
M_{\nu,\R}&:=\Big\{(\check{z},w)\in \R^{\nu}\times [w_{\min},w_{\max}]^{2p+1}:\\
&\check{z}_{0}<\check{z}_{1}, 
\forall m\in\{2,\dots,\nu\}:\\
&\frac{1}{\kappa_{\max}}\big(\check{z}_{m-1}-\check{z}_{m-2}\big)
\leq\check{z}_{m}-\check{z}_{m-1}\leq \kappa_{\max}\big(\check{z}_{m-1}-\check{z}_{m-2}\big)\Big\}.
\end{align*}
We extend the function $\Phi_{k,\ell,\nu}$ to
\begin{align*}
\widetilde{\Phi}_{ k,\ell,\nu}&:M_{\nu,\R}\to\R:(\check{z},w)\mapsto \frac{\big\|1-R(\cdot|g_{ k,\nu}(\check{z}),w)\big\|_{L^1([g_{ k,\nu}(\check{z})_{\ell-1},g_{ k,\nu}(\check{z})_{\ell}])}}{g_{ k,\nu}(\check{z})_{\ell}-g_{ k,\nu}(\check{z})_{\ell-1}}.
\end{align*}
We define a  closed and bounded and hence  compact subset of $M_{\nu}$
\begin{align*}
& M_{\nu,\R}^{0,1}:=\set{(\check{z},w)\in M_{\nu,\R}}{\check{z}_0=0, \check{z}_1=1}.
\end{align*}
 If $(\check{z},w)\in M_{\nu,\R}$, then
$\big(\frac{\check{z}-\check{z}_0}{\check{z}_1-\check{z}_0},w\big)\in M_{\nu,\R}^{0,1}$
and due to the substitution rule and Lemma~\ref{lem:properties for NURBS}, (\ref{item:NURBS translated}), there holds with the notation 
$\fint_c^d (\cdot)(t) \, d t=\int_c^d (\cdot)(t) \, d t/(d-c)$
 \begin{align*}
\widetilde{\Phi}_{k,\ell,\nu}(\check{z},w)&=\fint_{g_{k,\nu}(\check{z})_{\ell-1}}^{g_{k,\nu}(\check{z})_{\ell}}\big(1-R(t|g_{k,\nu}(\check{z}),w)\big)  \, d t\\
&=\fint_{\frac{g_{k,\nu}(\check{z})_{\ell-1}-\check{z}_0}{\check{z}_1-\check{z}_0}}^{\frac{g_{k,\nu}(\check{z})_{\ell}-\check{z}_0}{\check{z}_1-\check{z}_0}}\big(1-R\big(t(\check{z}_1-\check{z}_0)+\check{z}_0|g_{k,\nu}(\check{z}),w\big)\big) \, d t\\
&=\widetilde{\Phi}_{k,\ell,\nu}\left(\frac{\check{z}-\check{z}_0}{\check{z}_1-\check{z}_0}, w\right).
\end{align*}
Hence we have 
\begin{equation*}
\widetilde{\Phi}_{ k,\ell,\nu}(M_{\nu,\R})=\widetilde{\Phi}_{ k,\ell,\nu}(M_{\nu,\R}^{0,1}).
\end{equation*}
As in \eqref{item:Phi continuous and lower 1} one sees that $\widetilde{\Phi}_{ k,\ell,\nu}$ only attains values in $[0,1)$ and is continuous. 
Since $M_{\nu,\R}^{0,1}$ is compact we get 
\begin{equation*}
\sup\big(\Phi_{ k,\ell,\nu}(M_{\nu})\big)\leq\sup\big(\widetilde{\Phi}_{ k,\ell,\nu}(M_{\nu,\R})\big)<1. 
\end{equation*}
This proves the lemma for $\varphi=1$.
\item \label{item:varphi=estimators} We prove the lemma for $\varphi=c_1\chi_{(-\infty,T)}|_I+c_2\chi_{[T,\infty)}|_I$ with $c_1,c_2>0$ and $T\in I$.
Again, we extend the function $\Phi_{k,\ell,\nu}$ to $M_{\nu,\R}$ 
\begin{align*}
\widetilde{\Phi}_{ k,\ell,\nu}&:M_{\nu,\R}\to\R:
(\check{z},w)\mapsto\\
& \frac{\big\|\big(1-R(\cdot|g_{ k,\nu}(\check{z}),w)\big)\big(c_1\chi_{(-\infty,T)}+c_2\chi_{[T,\infty)}\big)\big\|_{L^1([g_{ k,\nu}(\check{z})_{\ell-1},g_{ k,\nu}(\check{z})_{\ell}])}}{\norm{c_1\chi_{(-\infty,T)}+c_2\chi_{[T,\infty)}}{L^1([g_{ k,\nu}(\check{z})_{\ell-1},g_{ k,\nu}(\check{z})_{\ell}])}}.
\end{align*}
For the proof of the lemma, it is sufficient to show $\sup\big(\widetilde{\Phi}_{ k,\ell,\nu}(M_{\nu,\R})\big)<1$.
Due to the substitution rule and Lemma \ref{lem:properties for NURBS}, (\ref{item:NURBS translated}), we can assume without loss of generality that $T=0$.
Because of \eqref{item:varphi=1} it only remains to show that
\begin{equation*}
\sup\big(\widetilde{\Phi}_{ k,\ell,\nu}(\set{(\check{z},w)\in M_{\nu,\R}}{\check{z}_0\leq 0\leq \check{z}_{\nu}}\big)<1.
\end{equation*}
As in \eqref{item:Phi continuous and lower 1}, one verifies that $\widetilde{\Phi}_{k,\ell,\nu}$ only attains values in $[0,1)$ and is continuous.
Moreover, due to the substitution rule and Lemma \ref{lem:properties for NURBS}, (\ref{item:NURBS translated}), we have for any element of $\set{(\check{z},w)\in M_{\nu,\R}}{\check{z}_0\leq 0\leq \check{z}_{\nu}}$
 \begin{align*}
\widetilde{\Phi}_{k,\ell,\nu}(\check{z},w)
=\widetilde{\Phi}_{k,\ell,\nu}\left(\frac{\check{z}}{\check{z}_1-\check{z}_0},w\right)
\end{align*}
and hence
\begin{align*}
&\widetilde{\Phi}_{ k,\ell,\nu}(\set{(\check{z},w)\in M_{\nu,\R}}{\check{z}_0\leq 0\leq \check{z}_{\nu}}\big)\\
&\quad=\widetilde{\Phi}_{ k,\ell,\nu}(\set{(\check{z},w)\in M_{\nu,\R}}{\check{z}_1-\check{z}_0=1,\check{z}_0\leq 0\leq \check{z}_{\nu}}\big).
\end{align*}
The second set is compact, since it is the image of a closed and bounded set under a continuous mapping.
Therefore it attains a maximum smaller than one.
This concludes the proof for $\varphi=c_1\chi_{(-\infty,T)}|_I+c_2\chi_{[T,\infty)}|_I$.

\noindent{\bf Step 4:} Finally, we are in the position to prove the assertion of the lemma for arbitrary functions $\varphi$ with the desired properties.
Let $\big((\check{z}^m,w^m)\big)_{m\in \N}$ be a sequence in $M_{\nu}$ such that the $\Phi_{k,\ell,\nu}$-values converge to $\sup(\Phi_{k,\ell,\nu}(M_{\nu}))$. 
Because of the boundedness of $M_{\nu}$, we can assume convergence of the sequence, where the limit $(\check{z}^{\infty},w^{\infty})$ is in $\overline{M_{\nu}}$, i.e. $(\check{z}^{\infty},w^{\infty})\in M_{\nu}$ or $(\check{z}^{\infty},w^{\infty})\in I^{\nu}\times [w_{\min},w_{\max}]^{2p+1}$ with $\check{z}_0^{\infty}=\dots=\check{z}_{\nu}^{\infty}$.
In the first case, we are done because of \eqref{eq:ran(Phi)<1} and the continuity of $\Phi_{k,\ell,\nu}$.
For the second case, we define 
\begin{equation*} 
a_n:=g_{k,\nu}(\check{z}^n,w^n)_{\ell-1},b_n:=g_{k,\nu}(\check{z}^n,w^n)_{\ell} \quad \text{and}\quad R_n:=R(\cdot|\check{z}^n,w^n).
\end{equation*}
Note that $a_n<b_n$, and that the sequences $(a_n)_{n\in\N}$ and $(b_n)_{n\in\N}$ converge to the limit 
\begin{equation*}
Z:=\check{z}_0^{\infty}=\dots=\check{z}_{\nu}^{\infty}\in I.
\end{equation*}
We consider two cases.

Case 1: If $\varphi$ is continuous at the limit $Z$, it is absolutely continuous on the interval $[a_n,b_n]$ for sufficiently big $n\in \N$.
Hence we have for sufficiently big $n\in \N$
\begin{align*}
\Phi_{k,\ell,\nu}(\check{z}^n,w^n)&=\frac{\int_{a_n}^{b_n}{\big(1-R_n(t)\big)\varphi(t)}\, d t}{\int_{a_n}^{b_n}{\varphi(t)}\, d t}\\
&=\frac{\int_{a_n}^{b_n}{\big(1-R_n(t)\big)\big(\varphi(a_n)+\int_{a_n}^{t}{\varphi'( \tau)}\, d  \tau\big)}\, d t}{\int_{a_n}^{b_n}{\big(\varphi(a_n)+\int_{a_n}^{t}{\varphi'( \tau)}\, d  \tau\big)}\, d t}\\
&\leq \frac{\int_{a_n}^{b_n}{\big(1-R_n(t)\big)\varphi(a_n)}\, d t+(b_n-a_n)^2\norm{\varphi'}{L^\infty(I)}}{(b_n-a_n)\varphi(a_n)-(b_n-a_n)^2\norm{\varphi'}{L^\infty(I)}}.
\end{align*}
The second summand converges to zero.
We consider the first one.
For any $C\in (0,1)$, there holds for sufficiently big $n\in \N$
\begin{align*}
\frac{\int_{a_n}^{b_n}{\big(1-R_n(t)\big)\varphi(a_n)}\, d t}{(b_n-a_n)\varphi(a_n)-(b_n-a_n)^2\norm{\varphi'}{L^\infty(I)}}&\leq \frac{\int_{a_n}^{b_n}{\big(1-R_n(t)\big)\varphi(a_n)}\, d t}{(b_n-a_n)\varphi(a_n)\cdot C}\\
&\leq \frac{1}{C}\Big(1-q\big(\kappa_{\max},w_{\min},w_{\max},p,1 \big)\Big).
\end{align*}
Since $C$ was arbitrary, this implies 
\begin{equation*}
\sup\big(\Phi_{k,\ell,\nu}(M_{\nu})\big)\leq \Big(1-q\big(\kappa_{\max},w_{\min},w_{\max},p,1 \big) \Big)<1.
\end{equation*}

Case 2: If $\varphi$ is not continuous at the limit $Z$ we proceed as follows.
For sufficiently big $n\in \N$, $\varphi$ is absolutely continuous on $[a_n,Z]$ and on $[Z,b_n]$.
By considering suitable subsequences, we can assume that $a_n<b_n\leq Z$, $Z\leq a_n<b_n$ or $a_n\leq Z \leq b_n$, each for all $n\in \N$.
In the first two cases, we can proceed as in Case 1.
In the third case, we argue similarly as in Case 1 to see, with the left-handed limit $\varphi^{\ell}(Z)$ and the right-handed limit $\varphi^{r}(Z)$ for $n\in \N$ big enough 
\begin{align*}
&\Phi_{k,\ell,\nu}(\check{z}^n,w^n)=\frac{\int_{a_n}^{b_n}{\big(1-R_n(t)\big)\varphi(t)}\, d t}{\int_{a_n}^{b_n}{\varphi(t)}\, d t}\\
&\quad=\frac{\int_{a_n}^{Z}{\big(1-R_n(t)\big)\big(\varphi^{\ell}(Z)-\int_{t}^{Z}{\varphi'( \tau)}\, d  \tau\big)}\, d t}{\int_{a_n}^{b_n}{\varphi(t)}\, d t}\\
&\quad\quad +\frac{\int_{Z}^{b_n}{\big(1-R_n(t)\big)\big(\varphi^{r}(Z)+\int_{Z}^{t}{\varphi'( \tau)}\, d  \tau\big)}\, d t}{\int_{a_n}^{b_n}{\varphi(t)}\, d t}\\
&\quad\leq \frac{\int_{a_n}^{b_n}{\big(1-R_n(t)\big)\big(\varphi^{\ell}(Z)\chi_{(-\infty,Z)}(t)+\varphi^r(Z)\chi_{[Z,\infty)}(t)\big)}\, d t}{\int_{a_n}^{b_n}{\varphi^{\ell}(Z)\chi_{(-\infty,Z)}(t)+\varphi^r(Z)\chi_{[Z,\infty)}(t)}\, d t-2(b_n-a_n)^2\norm{\varphi'}{L^\infty(I)}}\\
&\quad\quad+\frac{2(b_n-a_n)^2\norm{\varphi'}{L^\infty(I)}}{\int_{a_n}^{b_n}{\varphi^{\ell}(Z)\chi_{(-\infty,Z)}(t)+\varphi^r(Z)\chi_{[Z,\infty)}(t)}\, d t-2(b_n-a_n)^2\norm{\varphi'}{L^\infty(I)}}.
\end{align*}
Again, the second summand converges to zero, wherefore it remains to consider the first one.
For any $C\in (0,1)$, there holds for sufficiently big $n\in \N$ due to \eqref{item:varphi=estimators}
\begin{align*}
&\frac{\int_{a_n}^{b_n}{\big(1-R_n(t)\big)\big(\varphi^{\ell}(Z)\chi_{(-\infty,Z)}(t)+\varphi^r(Z)\chi_{[Z,\infty)}(t)\big)}\, d t}{\int_{a_n}^{b_n}{\varphi^{\ell}(Z)\chi_{(-\infty,Z)}(t)+\varphi^r(Z)\chi_{[Z,\infty)}(t)}\, d t-2(b_n-a_n)^2\norm{\varphi'}{L^\infty(I)}}\\
&\quad\leq\frac{\int_{a_n}^{b_n}{\big(1-R_n(t)\big)\big(\varphi^{\ell}(Z)\chi_{(-\infty,Z)}(t)+\varphi^r(Z)\chi_{[Z,\infty)}(t)\big)}\, d t}{\int_{a_n}^{b_n}{\varphi^{\ell}(Z)\chi_{(-\infty,Z)}(t)+\varphi^r(Z)\chi_{[Z,\infty)}(t)}\, d t\cdot{C}}\\
&\quad\leq \frac{1}{C}\Big(1-q\big(\kappa_{\max},w_{\min},w_{\max},p,\varphi^{\ell}(Z)\chi_{(-\infty,Z)}|_I+\varphi^r(Z)\chi_{[Z,\infty)}|_I \big)\Big)
\end{align*}
Since $C$ was arbitrary, this implies 
\begin{align*}
&\sup\big(\Phi_{k,\ell,\nu}(M_{\nu})\big)\\
&\quad\leq \Big(1-q\big(\kappa_{\max},w_{\min},w_{\max},p,\varphi^{\ell}(Z)\chi_{(-\infty,Z)}|_I+\varphi^r(Z)\chi_{[Z,\infty)}|_I  \big) \Big)<1,
\end{align*}
which concludes the proof.
\end{proof}

We return to our problem \eqref{eq:strong}. 
If $\Gamma=\partial\Omega$ is closed, each node $\check{z}\in \check{\mathcal{N}}_h$ may be assigned with a multiplicity $\#\check{z}\leq p+1$. 
This induces a sequence of non decreasing knots $\check{\mathcal{K}}_h=(t_i)_{i=1}^N$ on $(a,b]$.
Let $\mathcal{W}_h=(w_{i})_{i=1}^N$ be a sequence of weights on these knots. 
We extend the knot sequence $(b-a)$-periodically to $(t_i)_{i\in \Z}$ and the weight sequence to $(w_i)_{i\in \Z}$ by $w_{n+i}:=w_i$ for $i\in \Z$.
For the extended sequences we also write $\check{\mathcal{K}}_h$ and $\mathcal{W}_h$. 
We set
\begin{equation}
\hat{\mathscr{N}}^p(\check{\mathcal{K}}_h,\mathcal{W}_h):=\mathscr{N}^p(\check{\mathcal{K}}_{h},\mathcal{W}_{h})|_{[a,b)}\circ \gamma|_{[a,b)}^{-1}.
\end{equation}
If $\Gamma_{}\neq \partial\Omega$ is open, we assign to each node $\check{z}\in \check{\mathcal{N}}_h$ a corresponding multiplicity $\#\check{z}\leq p+1$ such that $\#\check{z}_0=\#\check{z}_n=p+1$.
This induces a sequence of non decreasing knots $\check{\mathcal{K}}_h=(t_i)_{i=0}^N$ on $[a,b]$.
Let $\mathcal{W}_h=(w_{i})_{i=1}^{N-p}$ be a sequence of weights.
To keep the notation simple, we extend the sequences arbitrarily to $\check{\mathcal{K}}_h=(t_i)_{i\in\Z}$ with $t_i\leq t_{i+1}$ for $i\in\Z$, $a>t_{i}\to-\infty$ for $i<0$ and $b<t_i\to \infty$ for $i>N$, and $\mathcal{W}_h=(w_i)_{i\in \Z}$ with $w_i>0$. 
This allows to define
\begin{equation}
\hat{\mathscr{N}}^p(\check{\mathcal{K}}_h,\mathcal{W}_h):=\mathscr{N}^p(\check{\mathcal{K}}_h,\mathcal{W}_h)|_{[a,b]}\circ \gamma^{-1}.
\end{equation}
Due to Lemma \ref{lem:properties for NURBS}, \eqref{item:NURBS local} and \eqref{item:NURBS determined}, this definition does not depend on how the sequences are extended.

 With the following theorem we conclude that Theorem \ref{thm:faermann}  holds for the span of transformed NURBS functions. 

\begin{theorem}\label{thm:NURBS satisfy Assumption}
 Let $p\in \N_0$ and $m:=\lceil p/2\rceil$.   
Then, the space   $\XX_h:=\hat{\mathscr{N}}^p(\check{\mathcal{K}}_{h},\mathcal{W}_{h})$  
is a subspace of $L^2(\Gamma_{})$ which satisfies the assumptions {\rm (A1)--(A2)} from Section \ref{sec:main theorem} with the constant of Lemma \ref{lem:NURBS satisfy Assumption}
\begin{equation*}
q=q\big(\kappa( \check{\mathcal{T}}_h),\min(\mathcal{W}_h),\max(\mathcal{W}_h),p,\varphi\big),
\end{equation*}
where $\varphi=|\gamma'|_I|$   with $I=[a-(b-a)(m+p),b+(b-a)(2p-m)]$ resp. $I=[a,b]$.
\end{theorem}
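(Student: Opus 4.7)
The plan is to verify (A1) and (A2) by selecting, for each element $T=T_j\in\mathcal{T}_h$, a single transported NURBS basis function $\psi_T := R_{i,p}\circ\gamma^{-1}$ as the test function and then invoking Lemma \ref{lem:NURBS satisfy Assumption} to obtain (A2). Since $R_{i,p}\in[0,1]$ and has compact support (Lemma \ref{lem:properties for NURBS}(ii),(v)), any element of $\mathcal{X}_h$ is bounded on $\Gamma$, which gives $\mathcal{X}_h\subseteq L^2(\Gamma)$.

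For (A1), I choose the knot index $i$ so that the parameter support $[t_{i-1},t_{i+p}]$, which occupies at most $p+1$ distinct element intervals of $\check{\mathcal{T}}_h$, is positioned (roughly symmetrically) around $[\check{z}_{j-1},\check{z}_j]$. An elementary index count shows that, for $m=\lceil p/2\rceil$, one can always place the support so that $[\check{z}_{j-1},\check{z}_j]\subseteq[t_{i-1},t_{i+p}]\subseteq[\check{z}_{j-1-m},\check{z}_{j+m}]$, the latter set being the parameter image of $\omega_h^m(T)$; the connectedness of $\mathrm{supp}(\psi_T)=\gamma([t_{i-1},t_{i+p}])$ then yields (A1). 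For closed $\Gamma$, this count is carried out on the $(b-a)$-periodic extension, which also forces the slightly larger parameter interval $I=[a-(b-a)(m+p),b+(b-a)(2p-m)]$ appearing in the statement.

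For (A2), I pass to the parameter domain with $\varphi:=|\gamma'|$; by the assumptions of Section \ref{subsec:boundary parametrization}, $\varphi$ is piecewise continuously differentiable on $I$ with positive infimum. Using the substitution rule and $(1-R_{i,p})^2\le 1-R_{i,p}$, I obtain
\begin{align*}
\|1-\psi_T\|_{L^2(\mathrm{supp}(\psi_T))}^2 = \int_{t_{i-1}}^{t_{i+p}}(1-R_{i,p}(t))^2\varphi(t)\,dt \le \int_{t_{i-1}}^{t_{i+p}}(1-R_{i,p}(t))\varphi(t)\,dt.
\end{align*}
I then split this integral along the (at most $p+1$) nontrivial subintervals $[t_{\ell-1},t_\ell]$ with $\ell\in\{p+1,\dots,2p+1\}$, in the local indexing of Lemma \ref{lem:NURBS satisfy Assumption}, apply that lemma on each with $\kappa_{\max}=\kappa(\check{\mathcal{T}}_h)$, $w_{\min}=\min(\mathcal{W}_h)$, $w_{\max}=\max(\mathcal{W}_h)$, and sum to obtain
\begin{align*}
\|1-\psi_T\|_{L^2(\mathrm{supp}(\psi_T))}^2 \le (1-q)\int_{t_{i-1}}^{t_{i+p}}\varphi(t)\,dt = (1-q)\,|\mathrm{supp}(\psi_T)|,
\end{align*}
which is (A2) with the stated $q$.

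The main obstacle I anticipate is the bookkeeping for (A1) in the presence of knot multiplicities as large as $p+1$, which can collapse part of the NURBS support to a single node, together with the periodic wrap-around for closed $\Gamma$ near the parameter endpoints. A routine side-check is that the $3p+2$ knots fed into Lemma \ref{lem:NURBS satisfy Assumption} belong to at most $2p+1$ consecutive elements of $\check{\mathcal{T}}_h$, so that their shape-regularity constant \eqref{eq:kappa-knots} is controlled by $\kappa(\check{\mathcal{T}}_h)$; this locality is exactly what guarantees that the constant $q$ produced by the lemma is uniform over all $T\in\mathcal{T}_h$.
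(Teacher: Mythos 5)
Your proposal is correct and follows essentially the same route as the paper: one fixed NURBS basis function per element (the paper takes $\psi_T=R_{i-m,p}|_{[a,b)}\circ\gamma|_{[a,b)}^{-1}$ with $t_{i-1}=\check z_{j-1}$, $t_i=\check z_j$, which is exactly the index count you sketch and is why $m=\lceil p/2\rceil$ enters), support inclusion for (A1), and for (A2) the pointwise bound $(1-\psi_T)^2\le 1-\psi_T$, transport to the parameter domain with $\varphi=|\gamma'|$, and Lemma \ref{lem:NURBS satisfy Assumption} applied over the support (your explicit knot-interval-wise summation is the same argument, only spelled out). Minor slips — the determining knots span up to $3p+1$, not $2p+1$, elements, and for closed $\Gamma$ one must use $\gamma|_{[a,b)}^{-1}$ — do not affect the argument.
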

\begin{proof}[Proof of Theorem \ref{thm:NURBS satisfy Assumption} for closed $\Gamma_{}=\partial\Omega$]
Lemma \ref{lem:properties for NURBS}, (\ref{item:NURBS rational}) and (\ref{item:NURBS local}), implies $\mathscr{N}^p(\check{\mathcal{K}}_{h},\mathcal{W}_{h})\leq L^2(\R)$.
This shows
$\hat{\mathscr{N}}^p(\check{\mathcal{K}}_{h},\mathcal{W}_{h})\leq L^2(\Gamma_{}).$

Let $T$ be an element of the mesh $\mathcal{T}_h$, $j\in\{1,\dots,n\}$ with $T=T_j$, and $i\in\{1,\dots,N\}$ with $\check{z}_{j-1}=t_{i-1}$ and $\check{z}_j=t_{i}$.
We define $\check{\psi}_T(t):=R_{i-m,p}(t)$ for $t\in [a,b)$   and extend it continuously at $b$.
 We set $\psi_{T}:=\check{\psi}_T|_{[a,b)}\circ\gamma|_{[a,b)}^{-1}$.
Because of Lemma \ref{lem:properties for NURBS}, (\ref{item:NURBS local}), there holds 
\begin{equation}\label{eq:Assumption parametric}
 \check{T}_j\subseteq \left[t_{i-m-1},t_{i-m+p}\right]\cap[a,b]=\mathrm{supp}(\check{\psi}_T)\subseteq \left[\check{z}_{j-m-1},\check{z}_{j-m+p}\right]\subseteq\left[\check{z}_{j-m-1},\check{z}_{j+m}\right].
\end{equation}
Since   $\gamma|_{[a,a+(b-a)/2]}$ and $\gamma|_{[a+(b-a)/2,b]}$ are homeomorphisms, there holds 
\begin{align}\label{eq:gamma(supp(R_T))}
\begin{split}
&\gamma(\mathrm{supp}(\check{\psi}_T))=\gamma\left(\overline{\set{t\in [a,b)}{\check{\psi}_T(t)\neq 0}}\right)= \mathrm{supp}(\psi_{T}),
\end{split}
\end{align}
wherefore $\mathrm{supp}(\psi_{T})$ is connected.
With \eqref{eq:Assumption parametric}, this shows
\begin{equation*}
T\subseteq \mathrm{supp}(\psi_{T}) \subseteq \omega_h^m(T),
\end{equation*}
and hence implies Assumption {\rm (A1)}.

To verify  Assumption {\rm (A2)}, we apply Lemma~\ref{lem:NURBS satisfy Assumption}.
Note that $R_{i-m,p}$ is completely determined by the knots in $I$ and their weights.
This is due to $I\supseteq [t_{i-m-p-1},t_{i+2p-m}]$ and Lemma \ref{lem:properties for NURBS}, \eqref{item:NURBS determined}.
The regularity constant of these knots from \eqref{eq:kappa-knots} is obviously smaller or equal than $\kappa(\check{\mathcal{K}}_h)$. 
Since $\gamma$ is piecewise two times continuously differentiable and its left and right derivative vanishes nowhere, $|\gamma'|$ is piecewise continuously differentiable and is bounded from above by some positive constant.
With Lemma \ref{lem:NURBS satisfy Assumption} and \eqref{eq:gamma(supp(R_T))}, we hence get
 \begin{align*}
&\|1-\psi_{T}\|_{L^2(\mathrm{supp}(\psi_{T}))}^2=\int_{\mathrm{supp}\left(\check{\psi}_T\right)}{(1-\check{\psi}_T)^2|\gamma'(t)|}\,d{t}\leq \int_{\mathrm{supp}\left(\check{\psi}_T\right)}{(1-\check{\psi}_T)|\gamma'(t)|}\,d{t}\\
&\quad=\|(1-\check{\psi}_T)\varphi\|_{L^1([t_{i-m-1},t_{i-m+p}]\cap[a,b])}\leq (1-q) \|\varphi\|_{L^1([t_{i-m-1},t_{i-m+p}]\cap[a,b])}\\ 
&\quad= (1-q) \int_{\mathrm{supp}(\check{\psi}_T)}{|\gamma'(t)|}\,d{t}= (1-q) |\mathrm{supp}(\psi_T)|.
\end{align*}
Consequently, Assumption {\rm (A2)} is also fulfilled.
This concludes the proof.
\end{proof}

\begin{proof}[Proof of Theorem \ref{thm:NURBS satisfy Assumption} for open $\Gamma_{}\subsetneqq \partial\Omega$]
The proof works analogously as before. 
Details are found in \cite[Theorem 4.14]{diplarbeit}.
\end{proof}

\subsection{Knot insertion}
Before we formulate an adaptive algorithm based on NURBS, we recall refinement by \textit{knot-insertion},  see e.g. \cite[Section 11]{Boor-SplineBasics}. 
For general knots $\check{\KK}=(t_i)_{i\in\Z}$ as in the previous subsection, a polynomial degree $p\in \N_0$, and a refined sequence $\check{\KK}'=(t_i')_{i\in\Z}$  (i.e., $\check{\KK}$ is a subsequence of $\check{\KK}$) Theorem \ref{thm:spline basis} implies nestedness
\begin{align}\label{eq:splines nested}
\mathscr{S}^p(\check{\KK}) \subseteq \mathscr{S}^p(\check{\KK}').
\end{align}
We assume that the multiplicities of the knots in $\check{\KK}'$ are lower or equal $p+1$.
Because of  Lemma \ref{lem:properties for NURBS}, \eqref{item:NURBS local}, and Theorem \ref{thm:spline basis} each element $\sum_{i\in \Z} a_i B_{i,p}^{\check{\KK}}\in \mathscr{S}(\check{\KK})$ admits some unique coefficient vector $(a_i')_{i \in \Z}$ with
\begin{align}
\sum_{i\in \Z} a_i B_{i,p}^{\check{\KK}}=\sum_{i\in \Z} a_i' B_{i,p}^{\check{\KK}'}.
\end{align}
If $\check{\KK}'$ contains only one additional knot $t'$ (possibly already contained in $\check{\mathcal{K}}$), the coefficients can be calculated explicitly.
We assume $t_i=t_i'$ for all $i$ with $t_i<t'$.
Then, \cite[Algorithm 11]{Boor-SplineBasics} shows
\begin{align}\label{eq:new coeffs}
\begin{split}
a_i'=\begin{cases}
a_i \quad &\text{if } t_{i+p}\leq t',\\
(1-\beta_{i-1,p}^{\check{\mathcal{K}}}(t')) a_{i-1} + \beta_{i-1,p}^{\check{\mathcal{K}}}(t') a_i \quad &\text{if } t_i<t'<t_{i+p},\\
a_{i-1}\quad &\text{if } t' \leq t_i.
\end{cases}
\end{split}
\end{align}

For closed $\Gamma=\partial\Omega$, we consider again knots $\check{\KK}_h=(t_i)_{i=1}^N$ and weights $\mathcal{W}_h=(w_i)_{i=1}^N$ as in the previous subsection.
We additionally assume $p+1\leq N$.
Now we insert an additional knot $t'\in (a,b]$ to the knots $\check{\KK}_h$ such that the multiplicities of the new knots $\check{\KK}_h'$ are still smaller or equal than $p+1$.
The new knots are extended $(b-a)$-periodically.
We want to find the unique weights $(w_i')_{i\in \Z}$ which fulfill 
\begin{align}\label{eq:equal weight fct}
\sum_{i\in\Z} w_i B_{i,p}^{\check{\KK}_h}= \sum_{i\in \Z} w_i' B_{i,p}^{\check{\KK}_h'}.
\end{align}
They are obviously $(N+1)$-periodic.
We cannot immediately apply \eqref{eq:new coeffs}, since infinitely many knots $\{t'+k(b-a):k\in\Z\}$ are added to $\check{\KK}_h$.
First, we add $\big\{t'+k(b-a):k\in\Z\setminus\{-1,0,1\}\big\}$ to $\check{\KK}_h$ and obtain $\check{\KK}^+=(t_i^+)_{i\in \Z}$ with $t_0=t_0^+$ and $t_1=t_1^+$.
There exist unique weights $(w_i^+)_{i\in \Z}$ with 
\begin{align*}
\sum_{i\in\Z} w_i B_{i,p}^{\check{\KK}_h}= \sum_{i\in \Z} w_i^+ B_{i,p}^{\check{\KK}^+}.
\end{align*}
With $I:=[t_{-1},t_{N+1})$, Lemma \ref{lem:properties for NURBS}, \eqref{item:NURBS local} and \eqref{item:NURBS determined}, and our assumption $p+1\leq N$ imply
\begin{equation*}
\sum_{i=-p}^{N+1} w_i B_{i,p}^{\check{\mathcal{K}}_h}|_I=\sum_{i=-p}^{N +1}w_i^+ B_{i,p}^{\check{\mathcal{K}}^+}|_I=\sum_{i=-p}^{N+1} w_i^+ B_{i,p}^{\check{\mathcal{K}}_h}|_I.
\end{equation*}
With $t_{N}<t_{N+1}$, it is easy to check that $B_{i,p}^{\check{\mathcal{K}}_h}|_I\neq 0$ for $i=0,\dots,N$.
Hence, Theorem~\ref{thm:spline basis} implies $w_i=w_i^+$ for $i=0,\dots,N$. 
It just remains to add the knots $t'-(b-a)$, $t'$ and $t'+(b-a)$. 
To this end, we can repetitively apply \eqref{eq:new coeffs} to obtain the weights $(w_i')_{i=1}^{N+1}$. 
Note that this only involves the weights $(w_i^+)_{i=0}^N$ are needed.
Moreover, the new weights $(w_i')_{i=1}^{N+1}$ are just convex combinations of the old ones $(w_i)_{i=1}^N$.
With \eqref{eq:NURBS space defined}, \eqref{eq:splines nested}, and \eqref{eq:equal weight fct}, we get nestedness
\begin{align}\label{eq:NURBS nested}
\hat{\mathscr{N}}^p(\check{\KK}_h,\mathcal{W}_h)\subseteq \hat{\mathscr{N}}^p(\check{\KK}_h',\mathcal{W}_h').
\end{align}
For closed $\Gamma\subsetneqq \partial\Omega$, a knot $t'\in (a,b]$ can  analogously be inserted to the knots $\check{\KK}_h=(t_i)_{i=0}^N$.
\subsection{Adaptive algorithm}\label{subsec:algorithm}
In this subsection, we introduce an adaptive algorithm, which uses the local contributions of $\eta_h$ to steer the $h$-refinement of the mesh $\mathcal{T}_h$ as well as the increase of the multiplicity of the nodes $\mathcal{N}_h$. 
To respect the iterative character of this procedure, all discrete quantities (as, e.g., $\mathcal{T}_h$, $\phi_h$, etc.) are indexed by the level $\ell \in \N_0$ of the adaptive process instead of the mesh-size $h$.
Let $0<\theta< 1$ be an adaptivity parameter and $p\in\N_0$ a polynomial degree.
We start with some nodes $\check{\mathcal{N}}_0$.
Each node has a multiplicity lower or equal $p+1$, where for open $\Gamma\subsetneqq \partial\Omega$ we assume $\#a=\#b=p+1$.
This induces knots $\check{\mathcal{K}}_0$.
Let  $\mathcal{W}_0$ be some initial positive weights.
We assume $p+1\leq N_0$ and for closed $\Gamma=\partial\Omega$, $|T|\leq |\Gamma|/4$ for all $T\in \TT_0$.
As the initial trial space, we consider 
\begin{equation}
\XX_0:=\hat{\mathscr{N}}^p(\check{\mathcal{K}}_0,\mathcal{W}_0)\subseteq L^2(\Gamma_{})\subseteq{H}^{-1/2}(\Gamma_{}).
\end{equation}
The adaptive algorithm with \textit{D\"orfler marking} reads as follows:

\begin{algorithm}\label{the algorithm}
\textbf{Input:} Adaptivity parameter $0<\theta<1$, polynomial order $p\in \N_0$, initial mesh $\TT_0$ with knots $\check{\KK}_0$, initial weights $\mathcal{W}_0$.\\
\textbf{Adaptive loop:} Iterate the following steps, until $\eta_\ell$ is sufficiently small:
\begin{itemize}
\item[(i)] Compute discrete solution $\phi_\ell\in\XX_\ell$.
\item[(ii)] Compute indicators $\eta_\ell({z})$
for all nodes ${z}\in\NN_\ell$.
\item[(iii)] Determine a minimal set of nodes $\MM_\ell\subseteq\NN_\ell$ such that
\begin{align}
 \theta\,\eta_h^2 \le \sum_{{z}\in\MM_\ell}\eta_\ell({z})^2.
\end{align}
\item[(iv)] If both nodes of an element $T\in\TT_\ell$ belong to $\mathcal{M}_\ell$, $T$  will be marked.
\item[(v)] For all other nodes in $\mathcal{M}_\ell$, the multiplicity will be increased if it is less or equal to $p+1$, otherwise the elements which contain one of these nodes $z\in\mathcal{M}_\ell$, will be marked.
\item[(vi)] Refine all marked elements $T\in\TT_\ell$ by bisection of the corresponding $\check{T}\in \check{\TT}_\ell$.
Use further bisections to guarantee that the new mesh $\TT_{\ell+1}$ satisfies
\begin{align}\label{eq:kappa small}
\kappa(\check{\TT}_{\ell+1})\leq 2\kappa(\check{\TT}_0).
\end{align}
Update counter $\ell\mapsto \ell+1$.  
\end{itemize}
\textbf{Output:} Approximate solutions $\phi_\ell$ and error estimators $\eta_\ell$ for all $\ell \in \N_0$.
\end{algorithm}
An optimal 1D bisection algorithm which ensures \eqref{eq:kappa small}, is discussed and analyzed in \cite{zbMATH06270343}. Note that boundedness of $\kappa(\check{\TT}_\ell)$ implies as well boundedness of $\kappa(\TT_\ell)$. 
Moreover, there holds 
\begin{align}
\min(\mathcal{W}_0)\leq \min(\mathcal{W}_\ell)\leq \max(\mathcal{W}_{\ell})\leq \max(\mathcal{W}_0),
\end{align}
since the new weights are convex combinations of the old weights.
Hence, Theorem~\ref{thm:faermann} and Theorem~\ref{thm:NURBS satisfy Assumption} apply and show efficiency and reliability of the estimator
\begin{align}
 \Crel^{-1}\,\norm{\phi-\phi_\ell}{\H^{-1/2}(\Gamma_{})}
 \le \eta_\ell
 \le \Ceff\,\norm{\phi-\phi_\ell}{\H^{-1/2}(\Gamma_{})}.
\end{align}


\section{Numerical experiments}
\label{section:numerics}
In this section, we empirically investigate the performance of Algorithm \ref{the algorithm} in three typical situations: In Section \ref{subsec:smooth sol} and Section  \ref{subsec:singular sol}, we consider a closed boundary $\Gamma=\partial\Omega$, where the solution is smooth resp. exhibits a generic (i.e., geometry induced) singularity.
In Section \ref{subsec:slit problem}, we consider a slit problem.
In either example, the exact solution is known and allows us to compute the Galerkin error to underline reliability and efficiency of the proposed estimator.

In each example, the parametrization $\gamma$ of the part $\Gamma_{}$ of the boundary is a NURBS curve and thus has the special form
\begin{equation}
\gamma(t)=\sum_{i\in \Z} C_i R_{i,p}^{\check{\mathcal{K}}_0,\mathcal{W}_0}(t)
\end{equation}
for all $t\in [a,b]$.
Here,  $p\in \N$ is the polynomial degree, $\check{\mathcal{K}}_0$ and $\mathcal{W}_0$ are knots and weights as in Section \ref{subsec:algorithm} and $(C_i)_{i\in \Z}$ are \textit{control points} in $\R^2$ which are periodic for closed $\Gamma=\partial\Omega$.

We choose the same polynomial degree $p$ for our approximation spaces $\XX_\ell$.
Since for the refinement strategy only knot insertion is used, we can apply \eqref{eq:splines nested} and \eqref{eq:equal weight fct} to see for the first and second component of $\gamma$
\begin{align}
\gamma_1,\gamma_2 \in \mathscr{N}^p (\check{\KK}_\ell,\mathcal{W}_\ell)|_{[a,b]}.
\end{align}
Hence, this approach reflects the main idea of isogeometric analysis, where the same space is used for the geometry and for the approximation.
We compare uniform refinement, where $\mathcal{M}_{\ell}=\mathcal{N}_{\ell}$ and hence all elements are refined, and adaptive refinement with $\theta=0.75$.



\subsection{Stable implementation of adaptive IGABEM}
\begin{figure}[h] 
\psfrag{circle (Section 5.2)}{\footnotesize circle (Section 5.2)}
\psfrag{pacman (Section 5.3)}{\footnotesize pacman (Section 5.3)}
\psfrag{slit (Section 5.4)}{\footnotesize slit (Section 5.4)}
\includegraphics[width=0.45 \textwidth]{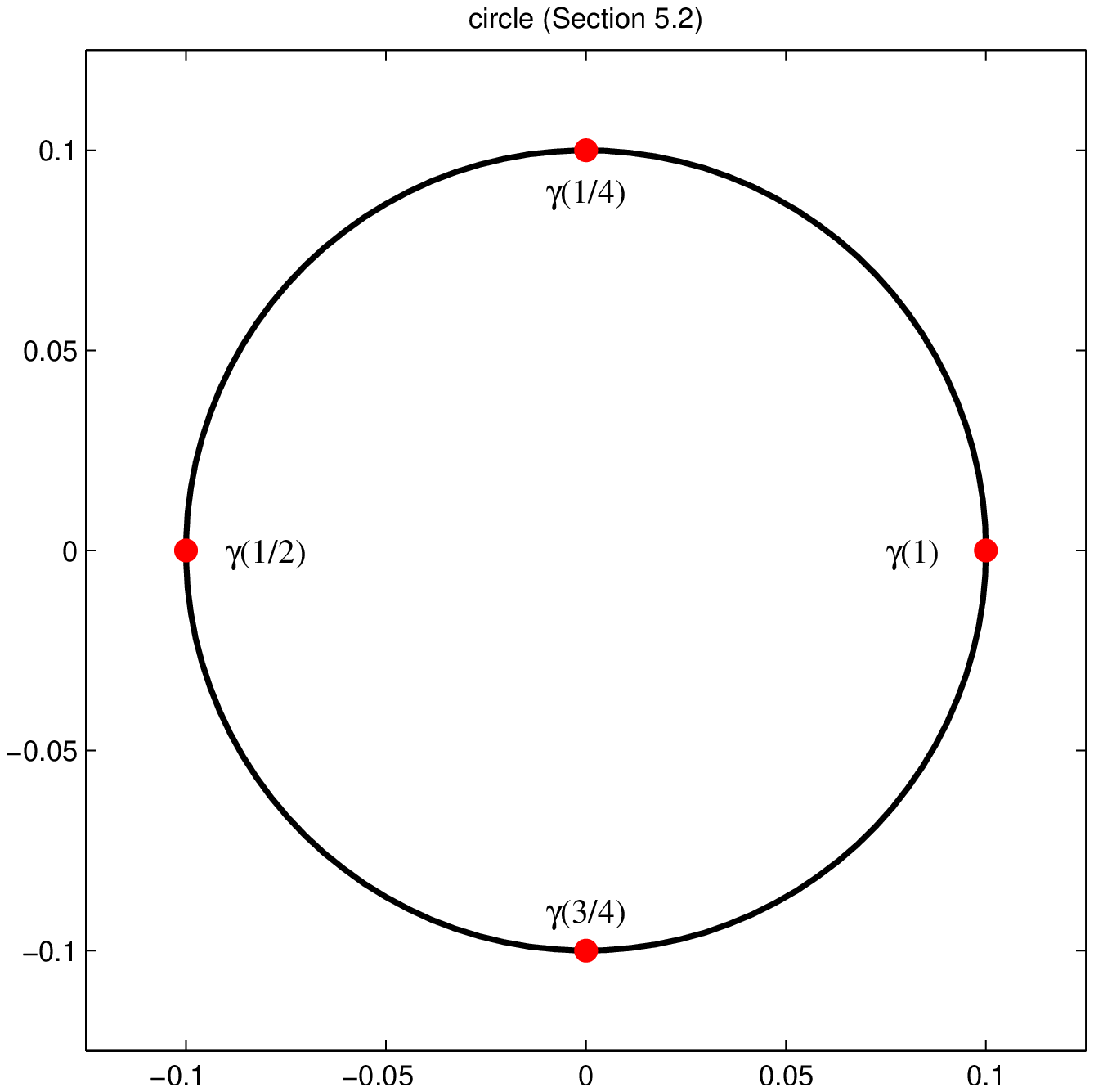}
\includegraphics[width=0.45\textwidth]{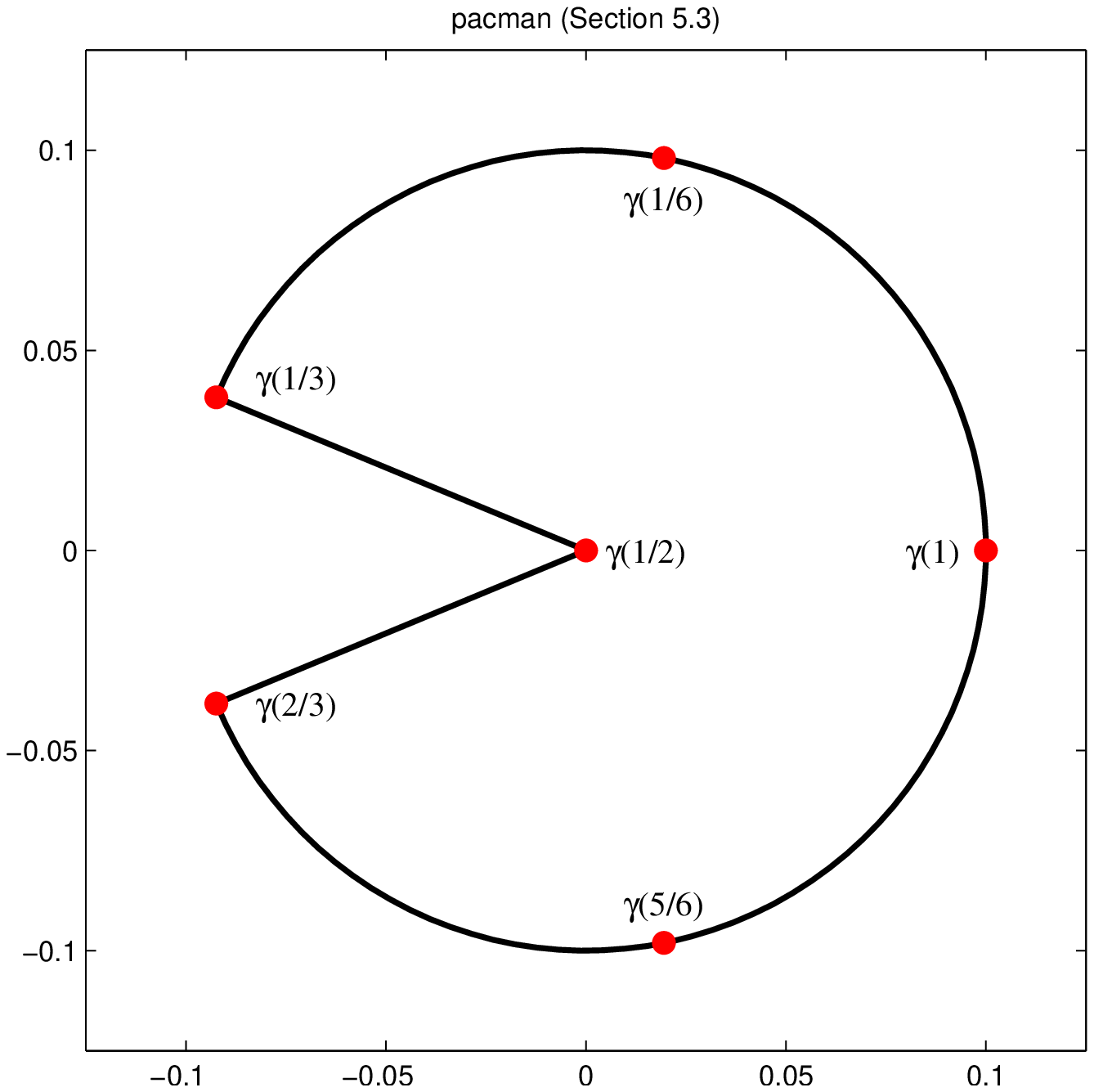}
\includegraphics[width=0.45 \textwidth]{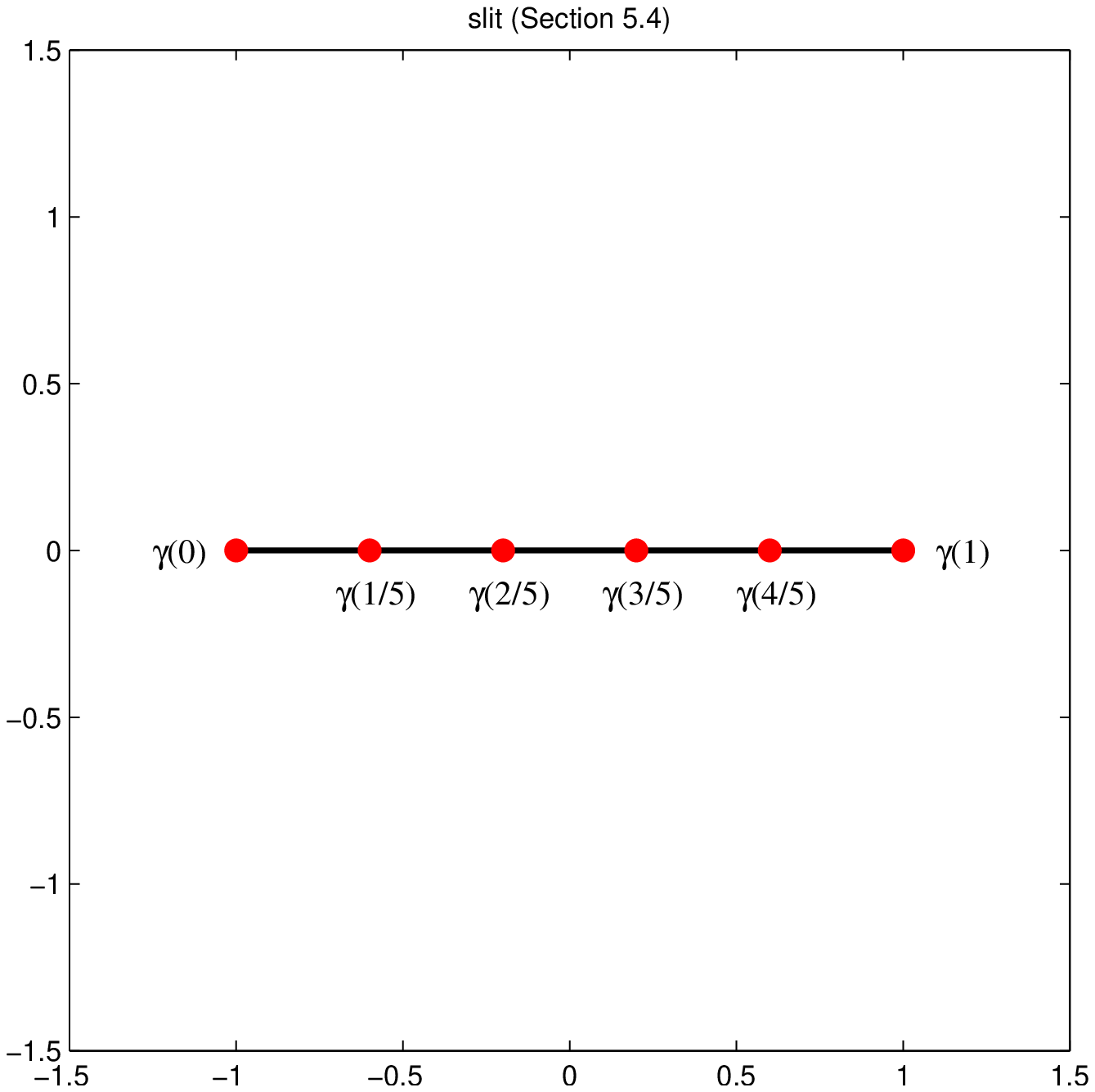}
\caption{Geometries and initial nodes for the experiments from Section~\ref{subsec:smooth sol}--\ref{subsec:slit problem}.} 
\label{fig:geometries}
\end{figure}
To compute the approximation $\phi_{h}$ of one step of the adaptive algorithm, we first note that Theorem \ref{thm:spline basis} implies that
\begin{align}
\big\{ R_{i,p}|_{[a,b)}:i=(1-p),\dots,N-\#b+1\big\}\circ\gamma|_{[a,b)}^{-1}
\end{align}
resp.
\begin{align}
\big\{ R_{i,p}|_{[a,b]}:i=1,\dots,N\big\}\circ\gamma^{-1}
\end{align}
forms a basis of $\hat{\mathscr{N}}(\check{\KK}_h,\mathcal{W}_h)$.
We abbreviate the elements of this basis with $\hat{R}_i$ and its index set with $\mathcal{I}$.
Then, there holds the unique basis representation $\phi_{h}=\sum_{i\in \mathcal{I}} c_{h,i} \hat{R}_i$.
The coefficient vector $\boldsymbol{c_h}$ is the unique solution of 
\begin{align}
\boldsymbol{V_h} \boldsymbol{c_h}= \boldsymbol{f_h}
\end{align}
 with the symmetric positive definite matrix 
\begin{align}
\boldsymbol{V_h}:=\left(\dual{V \hat{R}_j}{\hat{R}_i}_{L^2(\Gamma)}\right)_{i,j\in \mathcal{I}}
\end{align}
and the right-hand side vector
\begin{align}
\boldsymbol{f_h}:=\left(\dual{f}{\hat{R}_i}_{L^2(\Gamma)}\right)_{i\in \mathcal{I}}.
\end{align}
The energy norm then reads 
\begin{align}
\enorm{\phi_h}^2=\dual{V\phi_h}{\phi_h}=\boldsymbol{c_h}^T \boldsymbol{V_h} \boldsymbol{c_h}.
\end{align}
To calculate $\boldsymbol{V_h}$, $\boldsymbol{f_h}$ and the $H^{1/2}$-seminorms of the residual $r_h=f-V\phi_h$, singular integrals and double integrals have to be evaluated.
 Since, this is hardly possible analytically, we approximate the appearing integrals.
 To this end, we first write them as sum of integrals over the elements of the mesh $\check{\TT}$.
 In the spirit of \cite[Section 5.3]{ss}, the local integrals which contain singularities, are transformed by \textit{Duffy transformations} such that either the singularity  vanishes or a pure logarithmic singularity of the form $\log(t)$ on $[0,1]$ remains.
 Finally, the integrals are evaluated over the domain $[0,1]$ or $[0,1]^2$ using tensor-Gauss quadrature with weight function $1$ resp. $\log(t)$.
 Since the integrands are smooth up to logarithmic terms, this yields exponential convergence of adapted Gauss quadrature and hence provides  accurate approximations.
 For closed $\Gamma=\partial\Omega$ and arbitrary parametrization $\gamma$ as in Section \ref{subsec:boundary parametrization}, all details are elaborated in \cite[Section 5]{diplarbeit}.
\subsection{Adaptive IGABEM for problem with smooth solution}
\label{subsec:smooth sol}
\begin{figure}[h]
\psfrag{estimator and error}{\tiny estimator and error}
\psfrag{number of knots N}{\tiny number of knots $N$}
\psfrag{O(72)}[r][r]{\tiny$\mathcal{O}(N^{-7/2})$}
\psfrag{O(47)}[l][l]{\tiny$\mathcal{O}(N^{-4/7})$}
\psfrag{eta, unif.}[l][l]{\tiny est., unif.}
\psfrag{error, unif.}[l][l]{\tiny  error, unif.}
\psfrag{eta, ad.}[l][l]{\tiny est., ad.}
\psfrag{error, ad.}[l][l]{\tiny  error, ad.}
\includegraphics[width=0.7\textwidth]{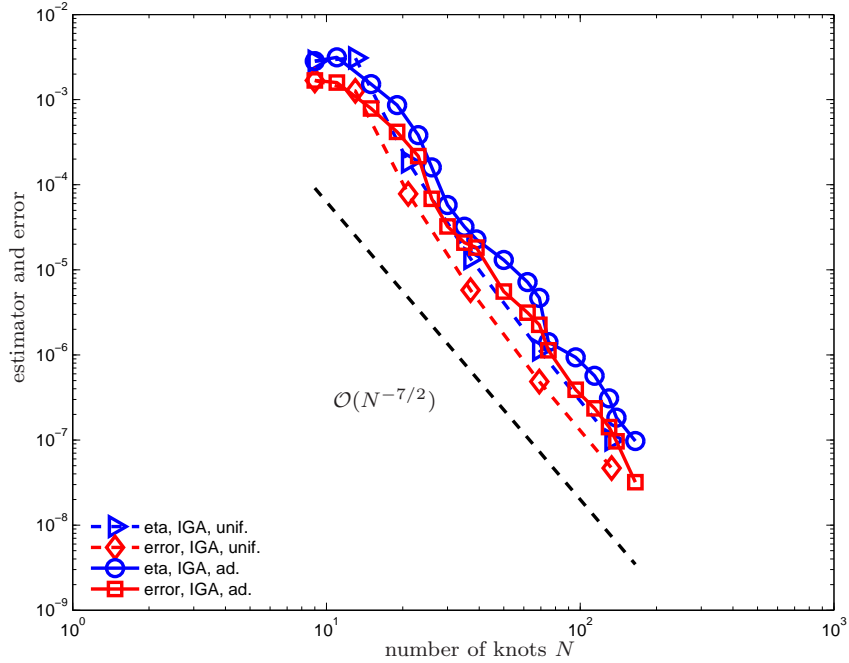}
\caption{Experiment with smooth solution on circle geometry from Section \ref{subsec:smooth sol}. Error and estimator are plotted versus the number of knots~$N$.} 
\label{fig:circle convergence}
\end{figure} 

Let $\Omega$ be the circle with midpoint $(0,0)$ and radius $1/10$.
\noindent We consider the Laplace-Dirichlet problem on $\Omega$
\begin{align}\label{eq:Laplace}
\begin{split}
-\Delta u=0\text{ in }{\Omega}\quad \text{ and }\quad u=g\text{ on } \Gamma_{}
\end{split}
\end{align}
for given Dirichlet data $g\in {H}^{1/2}(\Gamma_{})$ and closed boundary $\Gamma=\partial\Omega$.
The problem is equivalent to Symm's integral equation \eqref{eq:strong} with the \textit{single-layer integral operator}
\begin{align}\label{eq:single-layer}
V:{\H}^{-1/2}(\Gamma_{})\to H^{1/2}(\Gamma_{}),\quad V\phi(x):=-\frac{1}{2\pi}\int_{\Gamma_{}} \log (|x-y|) \phi(y) \,dy  
\end{align}
and the  right-hand side $f=(K+1/2)g$, where 
\begin{align}
K:{H}^{1/2}(\Gamma_{})\to H^{1/2}(\Gamma_{}), \quad Kg(x):= -\frac{1}{2\pi}\int_{\Gamma_{}} \big(\partial_{\nu(y)}\log (|x-y|) \big)g(y) \,dy 
\end{align}
denotes the \textit{double-layer integral operator}.
The unique solution of \eqref{eq:strong} is the normal derivative $\phi= \partial u/\partial \nu$ of the weak solution $u\in H^1(\Omega)$ of \eqref{eq:Laplace}.

We prescribe the exact solution $u(x,y)=x^2+10xy-y^2$ and solve  Symm's integral equation~\eqref{eq:strong} on the closed boundary $\Gamma=\partial\Omega$. 
The normal derivative $\phi=\partial{u}/\partial \nu$ reads
\begin{equation*}
\phi(x,y)=20 \big(x^2+10xy-y^2).
\end{equation*}
The geometry is parametrized on $[0,1]$ by the NURBS curve induced by
\begin{align*}
p&=2,\\
\check{\mathcal{K}}_0 &=\left(\frac{1}{4},\frac{1}{4},\frac{2}{4},\frac{2}{4},\frac{3}{4},\frac{3}{4},1,1,1\right),\\
\mathcal{W}_0&=\left(1,\frac{1}{\sqrt 2} ,1,\frac{1}{\sqrt 2},1,\frac{1}{\sqrt 2},1,1,\frac{1}{\sqrt 2}\right),\\
(C_i)_{i=1}^{N_0}&=\frac{1}{10}\cdot\left(\begin{pmatrix}0\\1\end{pmatrix},
\begin{pmatrix}-1\\1\end{pmatrix},
\begin{pmatrix}-1\\0\end{pmatrix},
\begin{pmatrix}-1\\-1\end{pmatrix},
\begin{pmatrix}0\\-1\end{pmatrix},
\begin{pmatrix}1\\-1\end{pmatrix},
\begin{pmatrix}1\\0\end{pmatrix},
\begin{pmatrix}1\\0\end{pmatrix},
\begin{pmatrix}1\\1\end{pmatrix}
\right).
\end{align*}
Note that this parametrization does  not coincide with the natural parametrization $t\mapsto (\cos(t),\sin(t))$.
Figure \ref{fig:geometries} visualizes the geometry and the $\gamma$-values of the initial nodes. 
Figure~\ref{fig:circle convergence} shows error and error estimator for the uniform and the adaptive approach.
All values are plotted in a log-log scale such that the experimental convergence rates are visible as the slope of the corresponding curves.
The Galerkin orthogonality allows to compute the energy error by 
\begin{align}\label{eq:error calc}
\enorm{\phi-\phi_\ell}^2=\enorm{\phi}^2-\enorm{\phi_\ell}^2=13\pi/5000-\enorm{\phi_\ell}^2,
\end{align}
With respect to the number of knots $N$, both approaches lead to the rate $\mathcal{O}(N^{-7/2})$.
If  discontinuous piecewise polynomials of order $2$ were used as ansatz space, this is the optimal convergence rate.
In each case, the curves for the error and the corresponding estimator are parallel.
This empirically confirms the proven efficiency and reliability of the Faermann estimator $\eta_h$.
\subsection{Adaptive IGABEM for problem with generic singularity}
\label{subsec:singular sol}
\begin{figure}[h] 
\psfrag{parameter domain}{\tiny parameter domain}
\psfrag{solution}{\tiny solution} 
\includegraphics[width=0.6\textwidth]{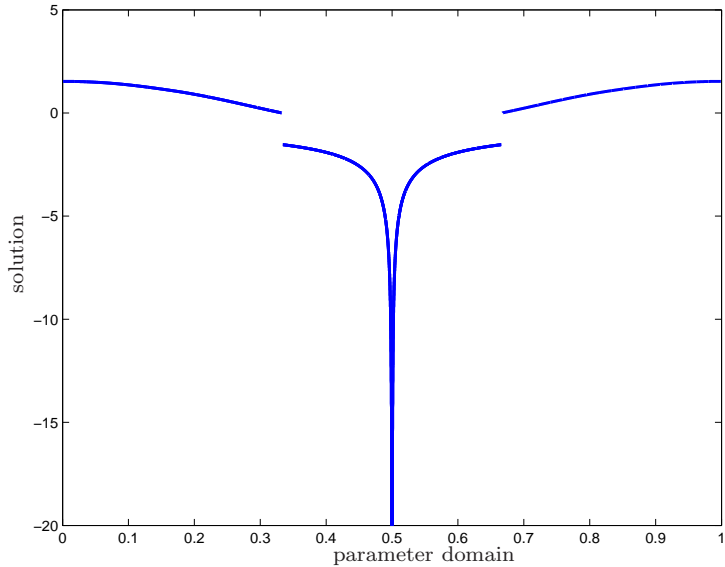}
\caption{Experiment with singular solution on pacman geometry from Section \ref{subsec:singular sol}. The singular solution $\phi\circ{\gamma}$ is plotted on the parameter interval, where $0.5$ corresponds to the origin, where $\phi$ is singular.}
\label{fig:pacman solution}
\end{figure}

\begin{figure}[h]
\psfrag{estimator and error}{\tiny estimator and error}
\psfrag{number of knots N}{\tiny number of knots $N$}
\psfrag{O(72)}[r][r]{\tiny$\mathcal{O}(N^{-7/2})$}
\psfrag{O(47)}[l][l]{\tiny$\mathcal{O}(N^{-4/7})$}
\psfrag{eta, unif.}[l][l]{\tiny est., unif.}
\psfrag{error, unif.}[l][l]{\tiny  error, unif.}
\psfrag{eta, ad.}[l][l]{\tiny est., ad.}
\psfrag{error, ad.}[l][l]{\tiny  error, ad.}
\includegraphics[width=0.7\textwidth]{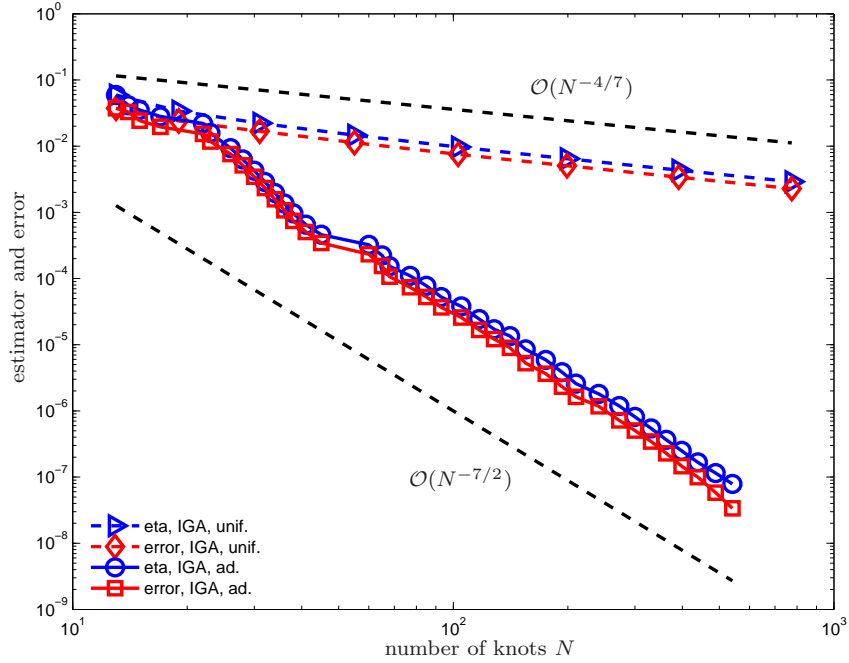}
\caption{Experiment with singular solution on pacman geometry from Section \ref{subsec:singular sol}. Error and estimator are plotted versus the number of knots~$N$.} 
\label{fig:pacman convergence}
\end{figure}

As second example, we consider the Laplace-Dirichlet problem \eqref{eq:Laplace} on the pacman geometry
\begin{equation*}
\Omega:=\left\{r(\cos(\alpha),\sin(\alpha)):0\le r<\frac{1}{10}, \alpha \in \left(-\frac{\pi}{2\tau},\frac{\pi}{2\tau}\right)\right\},
\end{equation*}
with $\tau=4/7$; see Figure \ref{fig:geometries}.
We prescribe the exact solution
\begin{equation*}
u(x,y)=r^{\tau}\cos\left(\tau\alpha\right) \quad\text{in polar coordinates}\quad (x,y)=r(\cos \alpha,\sin \alpha).
\end{equation*}
The normal derivative of $u$ reads 
\begin{equation*}
\phi(x,y)=\begin{pmatrix} \cos(\alpha)\cos\left(\tau\alpha\right)+\sin(\alpha)\sin\left(\tau\alpha\right)\\ \sin(\alpha)\cos\left(\tau\alpha\right)-\cos(\alpha)\sin\left(\tau\alpha\right)\end{pmatrix}\cdot \nu(x,y) \cdot \tau \cdot r^{\tau-1}
\end{equation*}
and has a generic singularity at the origin.
With $w=\cos(\pi/\tau)$, the geometry is parametrized on $[0,1]$ by the NURBS curve induced by
\begin{figure}[h]
\psfrag{knots}{\tiny knots}
\psfrag{indices of knots}{\tiny indices of knots}
\includegraphics[width=0.7\textwidth]{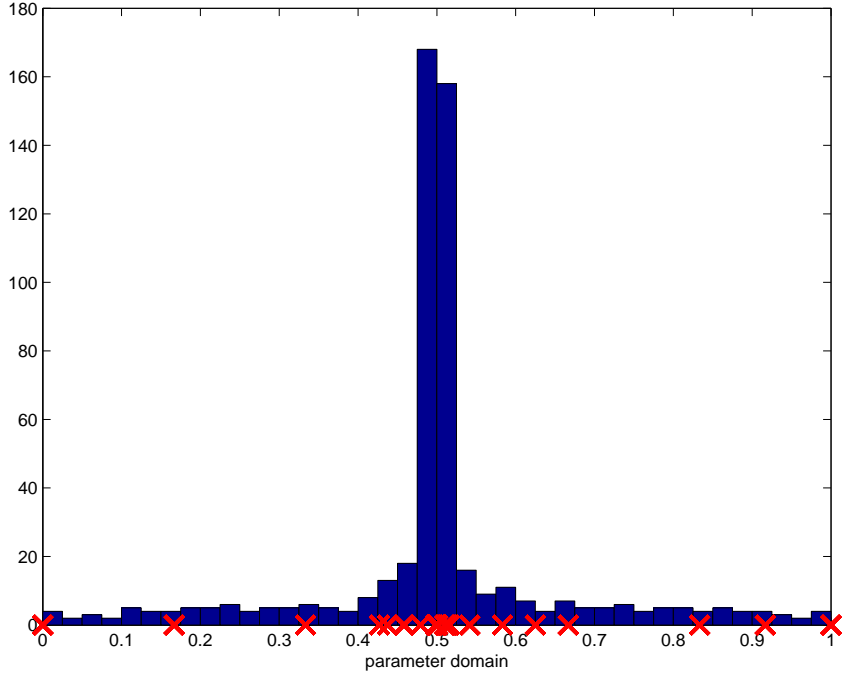}
\caption{Experiment with singular solution on pacman geometry from Section \ref{subsec:singular sol}.
Histogram of number of knots over the parameter domain. Knots with maximal multiplicity $p+1=3$ are marked.}
\label{fig:pacman knots}
\end{figure}

\begin{align*}
p&=2,\\
\check{\mathcal{K}}_0&=\left(\frac{1}{6},\frac{1}{6},\frac{2}{6},\frac{2}{6},\frac{3}{6},\frac{3}{6},\frac{4}{6},\frac{4}{6},\frac{5}{6},\frac{5}{6},1,1,1\right),\\
\mathcal{W}_0&=\left(1,w ,1,1,1,1,1,w,1,w,1,1,w\right),\\
(C_i)_{i=1}^{N_0}&=\frac{1}{10}\cdot\Bigg(
\begin{pmatrix}\cos(\pi/\tau\cdot 2/8)\\ \sin(\pi/\tau\cdot 2/8)\end{pmatrix},
\frac{1}{w}\begin{pmatrix}\cos(\pi/\tau\cdot 3/8)\\ \sin(\pi/\tau\cdot 3/8)\end{pmatrix},
\begin{pmatrix}\cos(\pi/\tau\cdot 4/8)\\ \sin(\pi/\tau\cdot 4/8)\end{pmatrix},\\
&\quad\frac{1}{2}\begin{pmatrix}\cos(\pi/\tau\cdot 4/8)\\ \sin(\pi/\tau\cdot 4/8)\end{pmatrix},
\begin{pmatrix}0\\ 0\end{pmatrix},
\frac{1}{2}\begin{pmatrix}\cos(\pi/\tau\cdot (-4)/8)\\ \sin(\pi/\tau\cdot (-4)/8)\end{pmatrix},
\begin{pmatrix}\cos(\pi/\tau\cdot (-4)/8)\\ \sin(\pi/\tau\cdot (-4)/8)\end{pmatrix},\\
&\quad\frac{1}{w}\begin{pmatrix}\cos(\pi/\tau\cdot (-3)/8)\\ \sin(\pi/\tau\cdot (-3)/8)\end{pmatrix},
\begin{pmatrix}\cos(\pi/\tau\cdot (-2)/8)\\ \sin(\pi/\tau\cdot (-2)/8)\end{pmatrix},
\frac{1}{w}\begin{pmatrix}\cos(\pi/\tau\cdot (-1)/8)\\ \sin(\pi/\tau\cdot (-1)/8)\end{pmatrix},\\
&\quad \begin{pmatrix}\cos(\pi/\tau\cdot 0/8)\\ \sin(\pi/\tau\cdot 0/8)\end{pmatrix}, 
\begin{pmatrix}\cos(\pi/\tau\cdot 0/8)\\ \sin(\pi/\tau\cdot 0/8)\end{pmatrix},
\frac{1}{w}\begin{pmatrix}\cos(\pi/\tau\cdot 1/8)\\ \sin(\pi/\tau\cdot 1/8)\end{pmatrix}
\Bigg).
\end{align*}

In Figure \ref{fig:pacman solution}, the solution $\phi$ is plotted over the parameter domain.
We can see that $\phi$ has a singularity at $t=1/2$ as well as two jumps at  $t=1/3$ resp. $t=2/3$.

  In Figure \ref{fig:pacman convergence},  error and error estimator are plotted.
 As the respective curves are parallel, we empirically confirm efficiency and reliability of the Faermann estimator.
For the calculation of the error, we used $\enorm{\phi}^2=0.083525924784082$ in \eqref{eq:error calc} which is obtained by Aitkin's $\Delta^2$-extrapolation.
Since the solution lacks regularity, uniform refinement leads to the suboptimal rate $\mathcal{O}(N^{-4/7})$, whereas adaptive refinement leads to the optimal rate $\mathcal{O}(N^{-7/2})$.

For adaptive refinement, Figure \ref{fig:pacman knots} provides a histogram of the knots in $[a,b]$ of the last refinement step.
We see that the algorithm mainly refines the mesh around the singularity at $t=1/2$.
Moreover, the multiplicity at the jump points $t=1/3$ and $t=2/3$ appears to be maximal so that the discrete solution $\phi_\ell$  also mimics the discontinuities of the exact solution $\phi$. 
Hence the functions of the considered ansatz space do not need to be continuous there, see Theorem \ref{thm:spline basis}.

\subsection{Adaptive IGABEM for slit problem}
\label{subsec:slit problem}

\begin{figure}[h]
\psfrag{estimator and error}{\tiny estimator and error}
\psfrag{number of knots N}{\tiny number of knots $N$}
\psfrag{O(52)}[r][r]{\tiny$\mathcal{O}(N^{-5/2})$}
\psfrag{O(12)}[l][l]{\tiny$\mathcal{O}(N^{-1/2})$}
\psfrag{eta, unif.}[l][l]{\tiny est., unif.}
\psfrag{error, unif.}[l][l]{\tiny  error, unif.}
\psfrag{eta, ad.}[l][l]{\tiny est., ad.}
\psfrag{error, ad.}[l][l]{\tiny  error, ad.}
\includegraphics[width=0.7\textwidth]{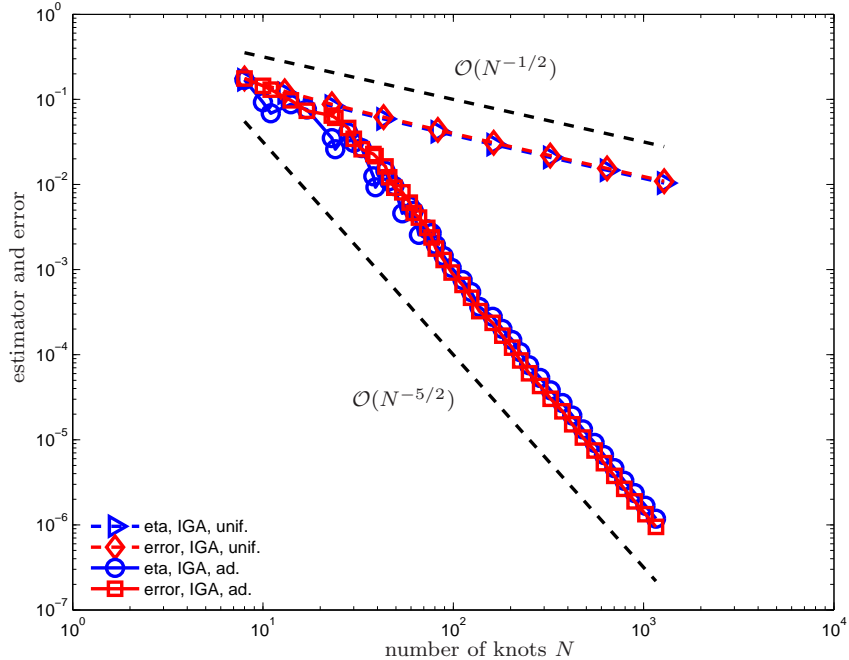}
\caption{Experiment with singular solution on slit from SectionÊ \ref{subsec:slit problem}. Error and estimator are plotted versus the number of knots~$N$.} 
\label{fig:slit convergence}
\end{figure} 

\begin{figure}[h]
\psfrag{knots}{\tiny knots}
\psfrag{indices of knots}{\tiny indices of knots}
\includegraphics[width=0.7\textwidth]{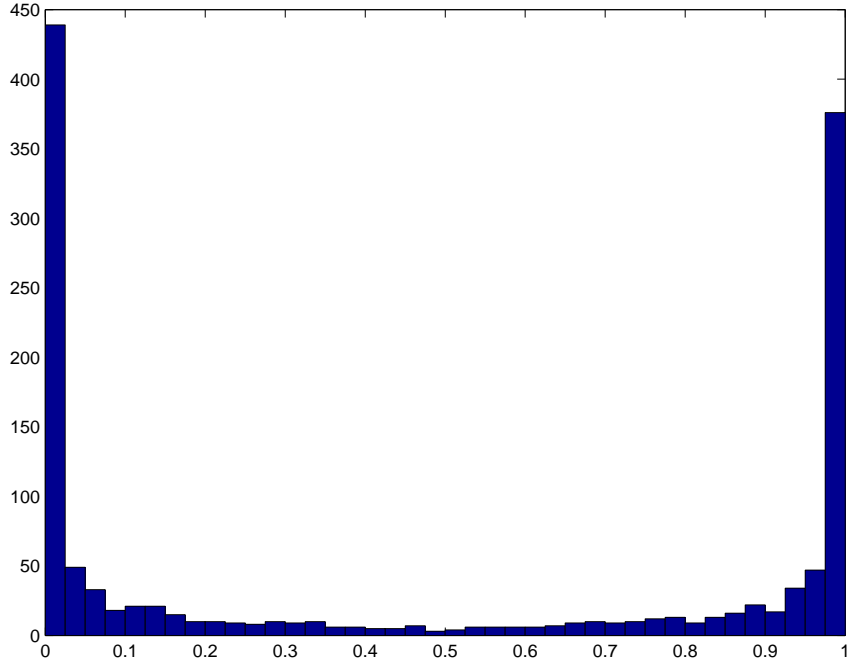}
\caption{Experiment with singular solution on slit from SectionÊ \ref{subsec:slit problem}. 
Histogram of number of knots over the parameter domain. } 
\label{fig:slit knots}
\end{figure}

As last example, we consider a crack problem on the slit $\Gamma= [-1,1]\times\{0\}$. 
 For $f(x,0):=-x/2$ and the single-layer operator $V$ from \eqref{eq:single-layer}, the exact solution of \eqref{eq:strong} reads
\begin{align*}
\phi(x,0)=\frac{-x}{\sqrt{1-x^2}}.
\end{align*}
Note that $\phi\in \H^{-\varepsilon}(\Gamma)\setminus L^2(\Gamma)$ for all $\varepsilon>0$ and that $\phi$ has singularities at the tips $x=\pm 1$.
We parametrize $\Gamma$ by the NURBS curve induced by

\begin{align*}
p&=1,\\
\check{\mathcal{K}}_0&=\left(0,0,\frac{1}{5},\frac{2}{5},\frac{3}{5},\frac{4}{5},1,1\right),\\\mathcal{W}_0&=\left(1,1,1,1,1,1\right),\\
(C_i)_{i=1}^{N_0-p}&=\left(
\begin{pmatrix}-1\\0\end{pmatrix},
\begin{pmatrix}-3/5\\0\end{pmatrix},
\begin{pmatrix}-1/5\\0\end{pmatrix},
\begin{pmatrix}1/5\\0\end{pmatrix},
\begin{pmatrix}3/5\\0\end{pmatrix},
\begin{pmatrix}1\\0\end{pmatrix}\right).
\end{align*}

In Figure \ref{fig:slit convergence},   error and error estimator for the uniform and  for the adaptive approach are plotted.
The error is obtained via \eqref{eq:error calc}, where $\enorm{\phi}^2=\pi/4$ is computed analytically.
Since the solution lacks regularity, uniform refinement leads to the suboptimal rate $\mathcal{O}(N^{-1/2})$, whereas adaptive refinement leads to the optimal rate $\mathcal{O}(N^{-5/2})$.

For adaptive refinement, we plot in Figure \ref{fig:slit knots}  a histogram of the knots in $[a,b]=[0,1]$ of the last refinement step.
As expected, the algorithm mainly refines the mesh at the tips  $t=0$ and $t=1$.


\bigskip

\noindent
{\bf Acknowledgement.} The authors acknowledge support through the Austrian Science 
Fund (FWF) under grant P21732 \emph{Adaptive Boundary Element Method} as well as
P27005 \emph{Optimal adaptivity for BEM and FEM-BEM coupling}. In addition, DP and MF
are supported through the FWF doctoral school \emph{Nonlinear PDEs} funded under
grant W1245, and GG through FWF under grant P26252 \emph{Infinite elements for exterior Maxwell problems}.

\bibliographystyle{alpha}
\bibliography{literature}

\end{document}